\newtheorem{theorem}{{\sc Theorem}}[section]
\newtheorem{remark}[theorem]{Remark}
\newtheorem{definition}[theorem]{Definition}
\newcommand{\bb}[1]{\mathbb{ #1}}
\newcommand{\dif}[2]{\displaystyle\frac{\partial #1}{\partial #2}}
\newcommand{\Grad}{\nabla}
\newcommand{\av}[1]{\langle #1 \rangle}
\def\XXint#1#2#3{{\setbox0=\hbox{$#1{#2#3}{\int}$ }
\vcenter{\hbox{$#2#3$ }}\kern-.6\wd0}}
\newcommand{\Gk}{\kappa}
\newcommand{\Gl}{\lambda}
\newcommand{\Gth}{\theta}
\newcommand{\GO}{\Omega}
\bmdefine\BGa{\alpha}
\bmdefine\BGb{\beta}
\bmdefine\BGd{\delta}
\bmdefine\BGe{\epsilon}
\bmdefine\BGve{\varepsilon}
\bmdefine\BGf{\phi}
\bmdefine\BGvf{\varphi}
\bmdefine\BGg{\gamma}
\bmdefine\BGc{\chi}
\bmdefine\BGi{\iota}
\bmdefine\BGk{\kappa}
\bmdefine\BGl{\lambda}
\bmdefine\BGn{\eta}
\bmdefine\BGm{\mu}
\bmdefine\BGv{\nu}
\bmdefine\BGp{\pi}
\bmdefine\BGth{\theta}
\bmdefine\BGvth{\vartheta}
\bmdefine\BGr{\rho}
\bmdefine\BGvr{\varrho}
\bmdefine\BGs{\sigma}
\bmdefine\BGvs{\varsigma}
\bmdefine\BGt{\tau}
\bmdefine\BGj{\tau}
\bmdefine\BGu{\upsilon}
\bmdefine\BGo{\omega}
\bmdefine\BGx{\xi}
\bmdefine\BGy{\psi}
\bmdefine\BGz{\zeta}
\bmdefine\BGD{\Delta}
\bmdefine\BGF{\Phi}
\bmdefine\BGG{\Gamma}
\bmdefine\BGL{\Lambda}
\bmdefine\BGP{\Pi}
\bmdefine\BGT{\Theta}
\bmdefine\BGS{\Sigma}
\bmdefine\BGU{\Upsilon}
\bmdefine\BGO{\Omega}
\bmdefine\BGX{\Xi}
\bmdefine\BGY{\Psi}
\newcommand{\CA}{{\mathcal A}}
\bmdefine\BCA{{\mathcal A}}
\bmdefine\BCB{{\mathcal B}}
\bmdefine\BCC{{\mathcal C}}
\bmdefine\BCD{{\mathcal D}}
\bmdefine\BCE{{\mathcal E}}
\bmdefine\BCF{{\mathcal F}}
\bmdefine\BCG{{\mathcal G}}
\bmdefine\BCH{{\mathcal H}}
\bmdefine\BCI{{\mathcal I}}
\bmdefine\BCJ{{\mathcal J}}
\bmdefine\BCK{{\mathcal K}}
\bmdefine\BCL{{\mathcal L}}
\bmdefine\BCM{{\mathcal M}}
\bmdefine\BCN{{\mathcal N}}
\bmdefine\BCO{{\mathcal O}}
\bmdefine\BCP{{\mathcal P}}
\bmdefine\BCQ{{\mathcal Q}}
\bmdefine\BCR{{\mathcal R}}
\bmdefine\BCS{{\mathcal S}}
\bmdefine\BCT{{\mathcal T}}
\bmdefine\BCU{{\mathcal U}}
\bmdefine\BCV{{\mathcal V}}
\bmdefine\BCW{{\mathcal W}}
\bmdefine\BCX{{\mathcal X}}
\bmdefine\BCY{{\mathcal Y}}
\bmdefine\BCZ{{\mathcal Z}}
\bmdefine\Bzr{ 0}
\bmdefine\Ba{ a}
\bmdefine\Bb{ b}
\bmdefine\Bc{ c}
\bmdefine\Bd{ d}
\bmdefine\Be{ e}
\bmdefine\Bf{ f}
\bmdefine\Bg{ g}
\bmdefine\Bh{ h}
\bmdefine\Bi{ i}
\bmdefine\Bj{ j}
\bmdefine\Bk{ k}
\bmdefine\Bl{ l}
\bmdefine\Bm{ m}
\bmdefine\Bn{ n}
\bmdefine\Bo{ o}
\bmdefine\Bp{ p}
\bmdefine\Bq{ q}
\bmdefine\Br{ r}
\bmdefine\Bs{ s}
\bmdefine\Bt{ t}
\bmdefine\Bu{ u}
\bmdefine\Bv{ v}
\bmdefine\Bw{ w}
\bmdefine\Bx{ x}
\bmdefine\By{ y}
\bmdefine\Bz{ z}
\bmdefine\BA{ A}
\bmdefine\BB{ B}
\bmdefine\BC{ C}
\bmdefine\BD{ D}
\bmdefine\BE{ E}
\bmdefine\BF{ F}
\bmdefine\BG{ G}
\bmdefine\BH{ H}
\bmdefine\BI{ I}
\bmdefine\BJ{ J}
\bmdefine\BK{ K}
\bmdefine\BL{ L}
\bmdefine\BM{ M}
\bmdefine\BN{ N}
\bmdefine\BO{ O}
\bmdefine\BP{ P}
\bmdefine\BQ{ Q}
\bmdefine\BR{ R}
\bmdefine\BS{ S}
\bmdefine\BT{ T}
\bmdefine\BU{ U}
\bmdefine\BV{ V}
\bmdefine\BW{ W}
\bmdefine\BX{ X}
\bmdefine\BY{ Y}
\bmdefine\BZ{ Z}
\newcommand{\SFL}{\mathsf{L}}
\begin{document}
\title{A hint on the localization of the buckling deformation at vanishing curvature points on thin elliptic shells}
\author{Davit Harutyunyan\thanks{University of California Santa Barbara, harutyunyan@math.ucsb.edu}}
\maketitle

\begin{abstract}
The general theory of slender structure buckling by Grabovsky and Truskinovsky [\textit{Cont. Mech. Thermodyn.,} 19(3-4):211-243, 2007], (later extended in [\textit{Journal of Nonlinear Science.,} Vol. 26, Iss. 1, pp. 83--119, 2016] by Grabovsky and the author), predicts that the critical buckling load of a thin shell under dead loading is closely related to the Korn's constant (in Korn's first inequality) of the shell under the Dirichlet boundary conditions resulting from the loading program. It is known that under zero Dirichlet boundary conditions on the thin part of the boundary of positive, negative, and zero (one principal curvature vanishing, and one apart from zero) Gaussian curvature shells, the optimal Korn constant in Korn's first inequality scales like the thickness to the power of $-1, -4/3,$ and $-3/2$ respectively. In this work we analyse the scaling of the optimal constant in Korn's first inequality for elliptic shells that contain a finite number of points where both principal curvatures vanish. We prove that the presence of at least one such point on the shell leads to the scaling drop from the thickness to the power of $-1$ to the thickness to the power of $-3/2.$ To our best knowledge, this is the first result in the direction for constant-sign curvature shells, that do not contain a developable region. In addition, under the assumption that a suitable trivial branch exists, we prove that in fact the buckling deformation of such shells under dead loading, should be localized at the vanishing curvature points, as the shell thickness $h$ goes to zero. 

\end{abstract}

\section{Introduction}
\label{sec:1}
Let $h>0$ be a small parameter, and let $S\subset\mathbb R^3$ be a bounded, regular, connected, and smooth enough surface with nonempty relative interior. A shell of thickness $h$ around $S$ is the $h/2$ neighborhood of $S$ in the normal direction:
$$\Omega^h=\{x+t\Bn(x) \ : \ x\in S,\ t\in [-h/2,h/2]\},$$
where $\Bn(x)$ is the unit normal to $S$ at the point $x\in S.$\footnote{The direction of the normal does not matter here.}
The surface $S$ is called the mid-surface of the shell $\Omega^h.$
It is known that the critical buckling load of a shell under compression is closely related to the optimal constant in Korn's first inequality
for the shell [\ref{bib:Gra.Tru.},\ref{bib:Gra.Har.1},\ref{bib:Gra.Har.2}]. In particular, the authors have rigorously proven in [\ref{bib:Gra.Har.1}], that the critical buckling load of a circular cylindrical shell under axial compression is given by the optimal constant in a Korn-like inequality for the shell, thus recovering the well-known Koiter formula [\ref{bib:Koiter}]. In [\ref{bib:Gra.Har.2}], the authors have proven a new critical buckling load formula for slender structures under dead loading, under some conditions on the domain that are mainly of geometric nature (see Section~\ref{sec:5} for a detailed discussion of the matter). Motivated mainly by these applications to buckling, the present work is concerned with the study of the optimal constant in Korn's first inequality for certain types of shells. We also discuss on how the
optimizers in the inequality should behave, which is important from the engineering point of view. Korn's inequalities and their nonlinear analogue, called the geometric rigidity estimate [\ref{bib:Fri.Jam.Mue.1}], play a central role in the elasticity theory. Korn's first and second inequalities read as follows: \textit{Let $n\geq 2,$ and let the domain $\Omega\subset\mathbb R^n$ be open, bounded, connected, and Lipschitz. Denote by $\mathrm{skew}(\mathbb R^n)$ the vector subspace of $L(\mathbb R^n,\mathbb R^n)$ that
consist of mappings with a skew-symmetric gradient. Then
\begin{itemize}
\item[](\textbf{Korn's second inequality}). There exists a constant $C_2=C_2(\Omega),$ depending only on the domain $\Omega,$ such that for any vector field
$\Bu\in H^1(\Omega),$ one has the inequality
\begin{equation}
\label{1.1}
\|\nabla\Bu\|_{L^2(\Omega)}^2\leq C_2(\|e(\Bu)\|_{L^2(\Omega)}^2+\|\Bu\|_{L^2(\Omega)}^2),
\end{equation}
where $e(\Bu)=\frac{1}{2}(\nabla\Bu+\nabla\Bu^T)$ is the symmetric part of the gradient.
\item[](\textbf{Korn's first inequality}). Let the subspace $V\subset H^1(\Omega)$ be such that $V\cap \mathrm{skew}(\mathbb R^n)=\{0\}.$ There exists a constant $C_1=C_1(\Omega,V),$ depending only on the domain $\Omega$ and the subspace $V,$ such that for any vector field
$\Bu\in V,$ one has the inequality
\begin{equation}
\label{1.2}
\|\nabla\Bu\|_{L^2(\Omega)}^2\leq C_1\|e(\Bu)\|_{L^2(\Omega)}^2.
\end{equation}
\end{itemize}
}
Initially these inequalities were introduced by Korn to prove coercivity of the Linear elastic energy [\ref{bib:Korn.1},\ref{bib:Korn.2}].
However they have found further important applications in the studies of deformation of bodies [\ref{bib:Friedrichs},\ref{bib:Ciarlet},\ref{bib:Kohn}], fracture of bodies [\ref{bib:Cha.Con.Fra.}], in dimension reduction and buckling analysis of thin structures  [\ref{bib:Tov.Smi.},\ref{bib:Fri.Jam.Mue.1},\ref{bib:Gra.Tru.},\ref{bib:Gra.Har.1},\ref{bib:Mueller.}].  Also, the nonlinear counterpart of the constant $C_1$ in (\ref{1.2}) (and sometimes the constant $C_1$ itself) is referred to as the rigidity of the shell $\Omega,$ of course in the case when the domain $\Omega$ is a shell.
Recent progress in the study of the asymptotics of the optimal constants $C_2$ and $C_1$ in (\ref{1.1}) and (\ref{1.2}), has revealed
the result $C_2\sim h^{-1}$ for practically all $C^2$ shells (and even thin domains), and  $C_1\sim h^{-1},$  $C_1\sim h^{-4/3},$ and
$C_1\sim h^{-3/2}$ for positive, negative, and zero Gaussian curvature shells (with one principal direction zero, the other one apart from zero)
with a clamped thin face [\ref{bib:Gra.Har.3},\ref{bib:Harutyunyan.1},\ref{bib:Harutyunyan.2}].
This completely solved the problem of determining the asymptotics of $C_2$ in (\ref{1.1}) for shells regardless of the curvature, while (\ref{1.2})
is not studied for shells with sign-changing curvature, or for instance elliptic or hyperbolic shells that contain a zero curvature point, or finitely or infinitely many of them. In this work we will focus on the case of elliptic shells that contain finitely many points of zero principal (thus Gaussian) curvatures. We will show that in that case the asymptotics of the constant $C_1$ changes from $h^{-1}$ to $h^{-3/2},$ see Theorem~\ref{th:3.1}. The result in Theorem~\ref{3.1} can be regarded as a hint for understanding the buckling deformation of shells under consideration,
see Section~\ref{sec:5} for a detailed discussion.

\section{Preliminaries}
\setcounter{equation}{0}
\label{sec:2}

We will assume throughout the paper that the mid-surface $S$ is connected, bounded, regular, has nonempty relative interior, and is
$C^3$ smooth up to the boundary. We will also assume that $S$ can be given by a single patch parametrization $\Br=\Br(\Gth,z)$ in the principle coordinates, denoted by $\Gth$ and $z$, with the domain given by
\begin{equation}
\label{2.1}
E=\{(\Gth,z) \ : \ \Gth\in (0,1), \  z\in (z_1(\Gth),z_2(\Gth))\},
\end{equation}
where the functions $z_1(\Gth)$ and $z_2(\Gth)$ determining the boundary of the domain satisfy the uniform estimates:
\begin{equation}
\label{2.2}
0\leq z_1(\Gth)<z_2(\Gth)\leq 1,\quad l \leq z_2(\Gth)-z_1(\Gth)\quad\text{for all}\quad \Gth\in [0,1],
\end{equation}
for some constant $l>0.$ It is a known fact in elementary differential geometry that near any non-umbilical point $x$ (when the two principal curvatures are different at $x$) of a $C^2$ surface, there exists a local patch parametrization by principal coordinates that are orthogonal, while at umbilical points every direction can be regarded as principal. Note that (\ref{2.2}) and the imposed regularity on $S$ yield that $S$ does not have infinitesimally sharp edges. Introducing the coordinate $t$ in the normal to $S$ direction $\Bn,$ we obtain the set of orthonormal curvilinear coordinates $(t,\Gth,z)$ on $S,$ with the entire shell given by
$$\Omega^h=\left\{x+t\Bn(x)\ : \ x\in S, \ t\in(-h/2,h/2)\right\}.$$
Denote next $A_{\Gth}=\left|\frac{\partial \Br}{\partial \Gth}\right|, A_{z}=\left|\frac{\partial \Br}{\partial z}\right|,$
the two nonzero components of the metric tensor of the mid-surface and the two principal curvatures by $\Gk_{\Gth}$ and $\Gk_{z}$.
We will be utilizing the notation $f_{,\alpha}$ for the partial derivative $\frac{\partial}{\partial\alpha}$ in some cases, in particular inside the gradient matrix, to simplify the notation. It is easy to see [\ref{bib:Harutyunyan.1}], that for any vector field $\Bu\in H^1(\Omega^h,\mathbb R^3),$ the gradient is given by 
\begin{equation}
\label{2.3}
\nabla\Bu(t,\Gth,z)=
\begin{bmatrix}
  u_{t,t} & \dfrac{u_{t,\Gth}-A_{\Gth}\Gk_{\Gth}u_{\Gth}}{A_{\Gth}(1+t\Gk_{\Gth})} &
\dfrac{u_{t,z}-A_{z}\Gk_{z}u_{z}}{A_{z}(1+t\Gk_{z})}\\[3ex]
u_{\Gth,t}  &
\dfrac{A_{z}u_{\Gth,\Gth}+A_{z}A_{\Gth}\Gk_{\Gth}u_{t}+A_{\Gth,z}u_{z}}{A_{z}A_{\Gth}(1+t\Gk_{\Gth})} &
\dfrac{A_{\Gth}u_{\Gth,z}-A_{z,\Gth}u_{z}}{A_{z}A_{\Gth}(1+t\Gk_{z})}\\[3ex]
u_{ z,t}  & \dfrac{A_{z}u_{z,\Gth}-A_{\Gth,z}u_{\Gth}}{A_{z}A_{\Gth}(1+t\Gk_{\Gth})} &
\dfrac{A_{\Gth}u_{z,z}+A_{z}A_{\Gth}\Gk_{z}u_{t}+A_{z,\Gth}u_{\Gth}}{A_{z}A_{\Gth}(1+t\Gk_{z})}
\end{bmatrix},
\end{equation}
for $(t,\Gth,z)\in E\times \left(-\frac{h}{2},\frac{h}{2}\right),$ where $\Bu=(u_t,u_\Gth,u_z)$ is the representation of $\Bu$ in the local curvilinear coordinates 
$(t,\Gth,z).$ Next we introduce the so-called shell mid-surface parameters.
Denote
\begin{align}
\label{2.4}
&A=\sup_{(\Gth,z)\in E}(A_\Gth+A_z),\quad a=\inf_{(\Gth,z)\in E}\min(A_\Gth,A_z),\\ \nonumber
&B=\sup_{(\Gth,z)\in E}(|\nabla A_\Gth|+|\nabla A_z|), \quad L=\sup_{\Gth\in(0,1)}(z_2(\Gth)-z_1(\Gth)).
\end{align}
We will assume the conditions hold on $S:$
$$0<a,A,B,L<\infty,$$
and 
$$|\kappa_\Gth|,|\kappa_z|,|\nabla \kappa_\Gth|,|\nabla \kappa_z|\leq K<\infty,$$
for some $K>0.$ The constants $a,A,B,l,L,$ and $K$ will be called shell mid-surface parameters, with more to come in the sequel.
Another main assumption we make on the surface $S$ is that it is elliptic, containing finitely many points with $\Gk_\Gth=\Gk_z=0.$ Namely, we assume that $\Gk_\Gth,\Gk_z>0$ on $\bar S-\{x_0,x_1,\dots,x_n\}$ for some points $x_0,x_1,\dots,x_n\in S,$ such that $\Gk_\Gth(x_i)=\Gk_z(x_i)=0,$ for $i=0,1,2,\dots,n,$ where $\bar S$ is the closure of $S.$ This in particular implies that each point $x_i$ is a local minimizer for both curvatures $\Gk_\Gth$ and $\Gk_z,$ thus we have $|\nabla\Gk_\Gth(x_i)|=|\nabla\Gk_z(x_i)|=0,$ for $i=0,1,\dots,n.$ We will assume quadratic growth of principal curvatures at each point $x_i,$ meaning that
\begin{equation}
\label{2.5}
\frac{1}{c_1}|x-x_i|^2\leq \Gk_\Gth(x),\Gk_z(x)\leq c_1|x-x_i|^2, \quad x\in S,\ i=0,1,\dots,n,
\end{equation}
for some constant $c_1>0.$ Let us emphasize that if instead of quadratic growth we assume growths with possibly different speeds at $x_i,$ then still an analogous to Theorem~3.1 result holds, where the only difference will be the asymptotics of the optimal constant in (\ref{3.3}), see Remark~\ref{rem:3.2}. Note that due to the fact that $\Gk_\Gth$ and $\Gk_z$ are strictly positive away from the points $x_i,$ then by continuity and compactness it will be sufficient to assume that (\ref{2.5}) is fulfilled in some small neighborhoods of the points $x_i.$ 
Also, (\ref{2.5}) is clearly equivalent to the fact that each of the Hessian matrices $D^2\Gk_\Gth$ and $D^2\Gk_z$ is strictly positive definite  at all points $x_i.$ And finally, we will assume that the curvatures $\Gk_\Gth$ and $\Gk_z$ satisfy the following comparability condition:
\begin{equation}
\label{2.6}
\quad \left|\nabla\left(\frac{\Gk_\Gth}{\Gk_z}\right)\right|\leq c_2,
\end{equation}
on $S$ for some constant $c_2>0.$ 


\begin{definition}
\label{def:2.1}
The constants $a,A,B,K,l,L,c_1,$ and $c_2$ are called the shell mid-surface parameters.
\end{definition}

Finally, let us mention that in what follows, all the norms $\|\cdot\|$ are $L^{2}$ norms, and the $L^2$ inner product of two functions
$f,g\colon\Omega^h\to\mathbb R$ will be given by
\[
(f,g)_{\Omega^h}=\int_{\Omega^h}A_zA_\Gth f(t,\Gth,z)g(t,\Gth,z)d\Gth dzdt,
\]
which gives rise to the norm $\|f\|_{L^2(\Omega^h)}$, that is equivalent to the standard $L^2(\Omega^h)$ norm in the Euclidean setting.

\section{Main results}
\label{sec:3}
\setcounter{equation}{0}

As already mentioned in Section~\ref{sec:1}, we will impose zero Dirichlet boundary conditions on the displacement $\Bu$ on the thin part of the
boundary of $\Omega^h.$ To that end we denote
\begin{equation}
\label{3.1}
\partial_{S}\Omega^h=\{x+t\Bn(x) \ : \ x\in\partial S,\ t\in (-h/2,h/2)\}.
\end{equation}
The following admissible set will then be a subspace of $H^1(\Omega^h):$
\begin{equation}
\label{3.2}
V^h=\{\Bu\in H^1(\Omega^h) \ : \ \Bu=0 \ \text{on} \ \partial_{S}\Omega^h \ \text{in the sense of traces}\}.
\end{equation}
Let us point out that in what follows any constants $C,C_i>0,$ and $\tilde h>0$ will depend only on the shell mid-surface parameters (see Definition~\ref{def:2.1}).
One of the main results of the paper is a sharp Korn's first inequality providing an Ansatz free lower bound for displacements $\Bu$ satisfying the boundary conditions (\ref{3.2}).
\begin{theorem}[Korn's first inequality]
\label{th:3.1}
Assume the mid-surface $S\subset\mathbb R^3$ is connected, bounded, regular, has nonempty relative interior, and is
$C^3$ smooth up to the boundary. Assume $S$ can be given by a single patch parametrization $\Br=\Br(\Gth,z)$ in the principle coordinates $\Gth$ and $z,$ such that the conditions (\ref{2.1}) and (\ref{2.2}) are satisfied. Assume further that the conditions (\ref{2.5}) and (\ref{2.6}) are fulfilled as well. Then there exist constants $C,C_0,\tilde h>0,$ depending only on the shell mid-surface parameters, such that if $L\leq C_0$ (the shell is short in the $z$ direction), one has the Korn first inequality
\begin{equation}
  \label{3.3}
\|\nabla\Bu\|^2\leq \frac{C}{h^{3/2}}\|e(\Bu)\|^2,
\end{equation}
for all $h\in(0,\tilde h)$ and all $\Bu\in V^h.$ Moreover, the estimate (\ref{3.3}) is asymptotically sharp as $h\to 0$ in the sense,
that for any $h\in (0,1),$ there exists a displacement $\Bu^h\in V^h$ that turns (\ref{3.3}) into equality.
\end{theorem}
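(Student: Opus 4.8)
The plan is to prove the two halves separately: the upper bound \eqref{3.3} (the hard part), and the matching lower bound showing asymptotic sharpness (the softer part, essentially a construction of a test field). For the upper bound, I would argue by contradiction/compactness is not available here because the constant must be explicit in $h$; instead the natural route is a direct decomposition of $\Bu$ into a \emph{bending part} and a \emph{membrane part}, controlled separately. Concretely, introduce the tangential strain components from \eqref{2.3} (the three entries coming from $e_{\Gth\Gth},e_{zz},e_{\Gth z}$ after specializing $t=0$, plus the normal-shear entries $e_{t\Gth},e_{tz},e_{tt}$), and observe that away from the vanishing-curvature points $x_i$ the curvatures $\Gk_\Gth,\Gk_z$ are bounded below by a positive constant, so on $S\setminus\bigcup_i B_\rho(x_i)$ one has the \emph{elliptic} Korn inequality with constant $O(h^{-1})$; this is exactly the regime analyzed in [\ref{bib:Gra.Har.3}]. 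The content of the theorem is that patching in the bad neighborhoods $B_\rho(x_i)$, where by \eqref{2.5} the curvatures degrade quadratically $\Gk\sim|x-x_i|^2$, costs an extra factor $h^{-1/2}$, exactly as in the zero-Gaussian-curvature case [\ref{bib:Harutyunyan.2}] where $C_1\sim h^{-3/2}$.

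The key steps, in order. (1) Reduce to a neighborhood estimate: by a partition of unity subordinate to $\{B_{2\rho}(x_i)\}\cup\{S\setminus\bigcup_i B_\rho(x_i)\}$, with $\rho$ a fixed constant depending only on the mid-surface parameters (using \eqref{2.5} to fix how small $\rho$ must be), it suffices to prove \eqref{3.3} for $\Bu$ supported in one $B_{2\rho}(x_i)$, since on the good region the $h^{-1}$ bound of [\ref{bib:Gra.Har.3}] is stronger, and the commutator terms from differentiating the cutoff are lower order because $\nabla\chi$ is bounded and $\|\Bu\|\le\|\nabla\Bu\|$ up to the Poincaré constant coming from the clamped face. (2) In a single bad neighborhood, freeze coordinates so that $\Gk_\Gth(x)\asymp\Gk_z(x)\asymp|x-x_i|^2\defeq g(\Gth,z)^2$ with $g$ comparable to the planar distance (here \eqref{2.6} is used to keep $\Gk_\Gth/\Gk_z$ comparable so the two principal directions can be treated on the same footing). (3) Run the ODE/thin-domain argument across the thickness: integrate the relations $u_{t,t}=e_{tt}$, $u_{\Gth,t}+ (\text{stuff})=2e_{t\Gth}$, $u_{z,t}+(\text{stuff})=2e_{tz}$ in $t\in(-h/2,h/2)$ to trade normal derivatives for the strain plus boundary data — but there is no clamping on the top/bottom faces, so instead average in $t$ and use the three "diagonal" strain entries to control the $t$-dependence of $(u_t,u_\Gth,u_z)$ in terms of $\|e(\Bu)\|$ and the $t$-averaged fields $\bar u_t,\bar u_\Gth,\bar u_z$. (4) For the averaged fields, the in-plane strains give, schematically, $\partial_z\bar u_z + \Gk_z\bar u_t \approx \bar e_{zz}$ and similarly for $\Gth$, so $\bar u_t$ is controlled by $\|e\|$ \emph{modulated by} $1/\Gk\sim g^{-2}$; combined with the Poincaré inequality in $z$ (short direction, using $L\le C_0$) this yields a first bound of the form $\|\nabla\bar u\|^2 \le C g^{-2}\|e\|^2 + (\text{bending energy terms})$. (5) The bending energy is recovered from the change of $t$-odd part of the strain across the thickness: the $t$-linear parts of the off-diagonal metric entries in \eqref{2.3} feed a Poincaré-type inequality that gains a factor $h$ but loses a factor $g^{-2}$, and optimizing the interplay — the $g^{-2}$ singularity integrated against the measure $g\,d\Gth\,dz$ near $x_i$, balanced against the $h$-dependent bending term — produces the exponent $-3/2$. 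This balancing is where the short-direction hypothesis $L\le C_0$ and the quadratic growth \eqref{2.5} are jointly essential.

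The main obstacle is step (4)–(5): making the "loss of $g^{-2}$, gain of $h$" trade-off rigorous requires a careful interpolation (a weighted Poincaré or Hardy-type inequality on the bad neighborhood) that distinguishes the annuli $|x-x_i|\asymp 2^{-k}$ down to the scale $|x-x_i|\asymp h^{1/2}$, below which the curvature is so small that the shell behaves like a plate and no further loss occurs; the exponent $3/2$ is precisely $1 + 1/2$ with the $1/2$ coming from stopping the dyadic sum at scale $h^{1/2}$. I would handle this by a dyadic decomposition of $B_{2\rho}(x_i)$, proving on each annulus a Korn inequality with constant $\sim 2^{2k}$ (elliptic scaling at that local curvature) but only summing contributions until $2^{-2k}\sim h$, and checking the boundary mismatch between consecutive annuli is absorbed. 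For the sharpness statement, I would construct $\Bu^h$ concentrated near one $x_i$ at length scale $h^{1/2}$ in the tangential directions: take a smooth profile in the rescaled variable $(\Gth-\Gth_i)/h^{1/2}$, with $u_t$ of order $1$, $u_\Gth,u_z$ of order $h^{1/2}$ (so the diagonal membrane strains $\partial u_\Gth+\Gk u_t$ nearly cancel, being $O(h)$ since $\Gk\sim h$ at that scale), and a $t$-linear correction tuned to kill the shear strains to leading order; then $\|\nabla\Bu^h\|^2\sim 1$ while $\|e(\Bu^h)\|^2\sim h^{3/2}$, and one finally notes that for each \emph{fixed} $h\in(0,1)$ the ratio $\|\nabla\Bu\|^2/\|e(\Bu)\|^2$ attains its supremum over $V^h$ (a finite quantity by Korn's inequality on the fixed Lipschitz domain $\Omega^h$, since $V^h\cap\Skew(\mathbb R^3)=\{0\}$ because of the clamped face), so equality in \eqref{3.3} is realized by the maximizer after adjusting the constant $C$.
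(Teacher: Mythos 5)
Your overall architecture (split the shell into the degenerate neighborhoods of the $x_i$ and their complement, control $u_t$ by $\|e(\Bu)\|/\Gk$ there, and realize sharpness by a localized Kirchhoff-type ansatz) is the right one, but two of the quantitative cruxes are wrong or missing. First, the reduction in your step (1) is circular: cutting off with $\chi$ produces the commutator $\nabla\chi\odot\Bu$ in $e(\chi\Bu)$, and bounding it by $\|\Bu\|\lesssim\|\nabla\Bu\|$ feeds the unknown back into the inequality you are proving, yielding $\|\nabla\Bu\|^2\lesssim h^{-3/2}(\|e(\Bu)\|^2+\|\nabla\Bu\|^2)$, which is vacuous. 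The commutator is only harmless \emph{after} one has the Korn--Poincar\'e bounds $\|u_\Gth\|,\|u_z\|\le C\|e(\Bu)\|$ and the weighted bound $\|\sqrt{K_G}\,u_t\|\le C(\|\sqrt{K_G}u_\Gth\|+\|\sqrt{K_G}u_z\|+\|e(\Bu)\|)$; obtaining these (via the Carleman-type weighted identity (\ref{4.9}) with $\varphi=e^{\lambda z}$ and the Codazzi--Gauss identity) is the bulk of the paper's work and precedes any localization. Moreover the paper does not patch with cutoffs at a fixed radius $\rho$: it uses an $h$-dependent cap of radius $h^{1/4}$, applies the Korn interpolation inequality (\ref{4.0}) on the complement (where $\sqrt{K_G}\gtrsim h^{1/2}$), applies the boundary-condition-free rigidity estimate (\ref{4.25}) on the rescaled cap to produce a constant skew matrix $\BA_2$, and compares the two on the overlap annulus; optimizing the radius exponent gives $\alpha=1/4$ and hence $h^{-3/4}$ for the gradient. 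Your steps (3)--(5) gesture at a thin-domain/dyadic substitute for this, but the decisive balance there is asserted, not derived, and your stopping scale is off (see below).

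Second, the sharpness construction fails at the scale you chose. With a Kirchhoff ansatz localized at tangential scale $\ell$ near $x_0$, the bending strain is $\sim t/\ell^2\sim h/\ell^2$ and the membrane strain is $\sim\Gk u_t\sim\ell^2$ (by (\ref{2.5})), while the destabilizing gradient entries are $\sim 1/\ell$; the optimal balance $h/\ell^2\sim\ell^2$ gives $\ell=h^{1/4}$, whence $\|e(\Bu)\|^2/\|\nabla\Bu\|^2\sim h^{3/2}$ --- this is exactly the paper's ansatz (\ref{4.34}). At your scale $\ell=h^{1/2}$ the bending term $h/\ell^2$ is $O(1)$ pointwise, one gets $\|\nabla\Bu\|^2\sim h$ and $\|e(\Bu)\|^2\sim h^2$, i.e.\ only the generic elliptic ratio $h^{-1}$; relatedly, your claim that $\partial_\Gth u_\Gth+\Gk_\Gth u_t=O(h)$ with $u_\Gth$ of amplitude $h^{1/2}$ varying on scale $h^{1/2}$ is internally inconsistent ($\partial_\Gth u_\Gth=O(1)$ there). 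The same scale confusion underlies your dyadic stopping rule at $|x-x_i|\sim h^{1/2}$ and the heuristic ``$3/2=1+1/2$''; the correct localization scale, which the paper later proves is forced on any optimizer (Theorem~\ref{th:5.4}), is $h^{1/4}$, where the \emph{curvature} is of size $h^{1/2}$.
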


The below asymptotically sharp Korn-Poincar\'e inequality can be viewed as another important contribution of the paper, which is a
major factor in the proof of Theorem~\ref{th:3.1}, thanks to a universal Korn interpolation inequality (this will be recalled later) proven in [\ref{bib:Harutyunyan.2}].

\begin{theorem}[Korn-Poincar\'e inequality]
\label{th:3.2}
Under the conditions of Theorem~\ref{th:3.1}, there exist constants $C,C_0,\tilde h>0,$ depending only on the shell mid-surface parameters, such that if $L\leq C_0$ (the shell is short in the $z$ direction), then the Korn-Poincar\'e inequalities hold:
\begin{equation}
  \label{3.4}
  \|u_\Gth\|^2\leq C\|e(\Bu)\|^2,\quad \|u_z\|^2\leq C\|e(\Bu)\|^2,
\end{equation}
for all $h\in(0,\tilde h)$ and all $\Bu\in V^h.$ Moreover, the estimates in (\ref{3.4}) are asymptotically sharp as $h\to 0$ in the sense,
that for any $h\in (0,1),$ there exists a displacement $\Bu^h\in V^h$ that realizes the asymptotics of the constants in all the inequalities in (\ref{3.4}).
\end{theorem}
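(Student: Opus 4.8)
The plan is to prove Theorem~\ref{th:3.2}, i.e.\ the asymptotically sharp Korn--Poincar\'e inequalities $\|u_\Gth\|^2\leq C\|e(\Bu)\|^2$ and $\|u_z\|^2\leq C\|e(\Bu)\|^2$, in two independent halves: the lower bound (the inequalities themselves) and the sharpness (construction of a near-optimizer). For the lower bound the strategy is to work with the six (scaled) components of the strain tensor $e(\Bu)$ read off from~(\ref{2.3}), treating the diagonal entries $e_{tt},e_{\Gth\Gth},e_{zz}$ and the off-diagonal entries $e_{t\Gth},e_{tz},e_{\Gth z}$ as small data, and to run a Poincar\'e-type argument in the thin $t$ variable combined with the ellipticity condition~(\ref{2.5}). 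Concretely: first, since $\Bu=0$ on $\partial_S\Omega^h$, a one-dimensional Poincar\'e inequality in $\Gth$ (for each fixed $z,t$, using that $\Gth$ ranges over an interval whose endpoints lie on $\partial S$) gives control of $u_\Gth$ on a band near $\partial S$ in terms of $\partial_\Gth u_\Gth$, hence in terms of $e_{\Gth\Gth}$ plus lower-order curvature terms $A_\Gth\Gk_\Gth u_t$; the point is that the $u_t$ contamination carries a factor $\Gk_\Gth$, which near the bad points $x_i$ is $O(|x-x_i|^2)$ and away from them is bounded below, so it can be absorbed or controlled. Second, one propagates this estimate off the boundary band: differentiating appropriately and using $e_{\Gth z}$ together with the comparability condition~(\ref{2.6}) lets one pass information in the $z$ direction; the shortness hypothesis $L\leq C_0$ is exactly what makes the associated Poincar\'e/Gr\"onwall constants in $z$ harmless. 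Third, the normal component $u_t$ is handled through $e_{tt}=u_{t,t}$ and the boundary condition at $t=\pm h/2$: a $t$-Poincar\'e inequality controls $u_t$ by $h\|e_{tt}\|$, which feeds back into closing the loop. Assembling these with careful bookkeeping of the constants (all depending only on the shell mid-surface parameters) yields~(\ref{3.4}).

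For the sharpness half the plan is an explicit Ansatz localized at a vanishing-curvature point, say $x_0$. Following the philosophy of the $h^{-3/2}$ borderline (as in the zero-Gaussian-curvature case of [\ref{bib:Harutyunyan.2}]), one builds $\Bu^h$ supported in a shrinking neighborhood of $x_0$ of size $h^{\alpha}$ in $\Gth$ and $h^{\beta}$ in $z$ for suitable exponents $\alpha,\beta$ chosen so that the bending-type terms and the stretching-type terms in $e(\Bu^h)$ balance; because $\Gk_\Gth,\Gk_z\sim|x-x_0|^2\sim h^{2\min(\alpha,\beta)}$ on that neighborhood, the effective curvature seen by the test function is a small power of $h$, and this is precisely what degrades the Korn constant from $h^{-1}$ to $h^{-3/2}$. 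One then checks directly that $\|u_\Gth^h\|^2$ (resp.\ $\|u_z^h\|^2$) and $\|e(\Bu^h)\|^2$ are comparable with $h$-independent ratio, and that $\Bu^h$ vanishes on $\partial_S\Omega^h$ because its support is interior. This produces, for each $h\in(0,1)$, a displacement realizing the asymptotics of all constants in~(\ref{3.4}).

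I expect the main obstacle to be the lower bound near the bad points $x_i$: away from them the surface is uniformly elliptic and the classical $h^{-1}$ Korn estimate machinery applies almost verbatim, but in the neighborhoods where $\Gk_\Gth,\Gk_z\to 0$ quadratically one loses the usual coercivity that ellipticity provides, and one must exploit the \emph{quantitative} quadratic growth~(\ref{2.5}) together with the comparability~(\ref{2.6}) to show that the deterioration is no worse than $h^{-3/2}$. The delicate step is choosing the right weighted test/cutoff scales in $\Gth$ and $z$ (matched to $|x-x_i|\sim h^{1/2}$, which is where the curvature term $\Gk\cdot(\text{length scale})$ becomes comparable to the thickness $h$) and verifying that the Poincar\'e constants produced by the $z$-propagation, controlled via $L\leq C_0$, do not secretly reintroduce a worse power of $h$. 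A secondary technical point is that~(\ref{3.4}) is asserted to imply Theorem~\ref{th:3.1} only through the universal Korn interpolation inequality of [\ref{bib:Harutyunyan.2}]; for the self-contained proof of Theorem~\ref{th:3.2} itself one does not need the gradient bound, but one must be careful that the constants here are stated in terms of the same mid-surface parameters so that the later deduction goes through.
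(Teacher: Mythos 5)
Your outline diverges from the paper's argument at exactly the point where the real difficulty lies, and two of your steps are false as stated. First, the claim that the curvature contamination $A_\Gth\Gk_\Gth u_t$ in the tangential strains ``can be absorbed or controlled'' begs the question: $u_t$ is the \emph{dominant} component of an optimizer (the sharp Ansatz has $\|u_t\|\sim h^{-1/2}\|e(\Bu)\|$), so away from the degenerate points, where $\Gk_\Gth$ is bounded \emph{below}, the term $\Gk_\Gth u_t$ is of size $h^{-1/2}\|e(\Bu)\|$ and cannot be treated as lower order. The paper's mechanism is built precisely to avoid this: it cancels $u_t$ \emph{exactly} between the $22$ and $33$ strain entries by pairing $\rho\BF^{sym}_{22}-\BF^{sym}_{33}$ with $\varphi u_z$, where $\rho=\Gk_z/\Gk_\Gth$ (this is where hypothesis (\ref{2.6}) enters), uses a Carleman weight $\varphi=e^{\lambda z}$ to make the resulting quadratic form in $(u_\Gth,u_z)$ sign-definite, and recovers control of $\|\sqrt{K_G}\,u_t\|$ only afterwards via the Codazzi--Gauss identity; the shortness hypothesis $L\le C_0$ then closes the loop between the $u_z$ and $u_\Gth$ estimates. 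None of these ingredients appears in your plan, and without them the $\Gth$- and $z$-propagation you describe cannot close. Second, ``a $t$-Poincar\'e inequality controls $u_t$ by $h\|e_{tt}\|$'' is wrong: the Dirichlet condition is imposed only on the lateral boundary $\partial_S\Omega^h$, not on the faces $t=\pm h/2$, and the asserted bound is contradicted by the optimal Ansatz, for which $\|u_t\|^2\sim h$ while $\|e(\Bu)\|^2\sim h^2$ (up to a common area factor).

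Two further points. Your assertion that a self-contained proof of (\ref{3.4}) does not need the gradient bound is at odds with the actual structure of the argument: the weighted identities naturally control $u_\Gth,u_z$ by the \emph{simplified} strain $\BF^{sym}$, and passing from $\BF^{sym}$ to $e(\Bu)$ costs an error $h\|\nabla\Bu\|$ (estimate (\ref{4.3})), which is only disposed of once (\ref{3.3}) is proved --- and that in turn requires the interpolation inequality (\ref{4.0}), a decomposition of the shell at scale $h^{1/4}$ around $x_0$, and Korn's first inequality without boundary conditions on the rescaled small shell. Finally, your Ansatz paragraph is in the right spirit (a localized Kirchhoff-type construction at $x_0$) but leaves the localization exponents undetermined; the correct choice is the isotropic scale $h^{1/4}$ in both $\Gth$ and $z$, fixed by balancing the curvature term $\Gk u_t\sim h^{1/2}$ against the bending strain $t\,\partial^2 f\sim h\cdot h^{-1/2}$, and this is also the scale at which the lower-bound argument is optimized.
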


\begin{remark}
\label{rem:3.2}
It is not difficult to see that in the case when instead of quadratic growth in (\ref{2.5}), one has mixed growth conditions  
\begin{equation}
\label{3.4.1}
\frac{1}{c_1}|x-x_i|^{p_i}\leq \Gk_\Gth(x),\Gk_z(x)\leq c_1|x-x_i|^{p_i}, \quad x\in S,\ i=0,1,\dots,n,
\end{equation}
where $p_i\geq 2,$ then while the estimates in (\ref{3.4}) continue to hold, the estimate (\ref{3.3}) has to be replaced by 
\begin{equation*}
\|\nabla\Bu\|^2\leq \frac{C}{h^{\frac{2p+2}{p+2}}}\|e(\Bu)\|^2,\quad\text{where}\quad p=\max_{0\leq i\leq n} {p_i}.
\end{equation*} 
This can be verified by going over the proof of Theorem~\ref{th:3.1} with the choice $\alpha=\frac{1}{p+2}$ in (\ref{4.20}). 
The Ansatz in (\ref{4.34}) should then be chosen to be localized in the $h^\alpha$ neighborhood of the point\footnote{In case there exist multiple such points, we choose just one.} $x_j$ for which $p_j=p.$ This will also lead to the changes $\Gl(h)\sim h^{\frac{2p+2}{p+2}}$ and $\Gl(h)\leq C h^{\frac{2p+2}{p+2}}$ in (\ref{5.12}) and (\ref{5.13}) respectively. 
\end{remark}

Some remarks are in order.
\begin{remark}
\label{rem:3.3}
The condition $L<C_0$ as well as the assumption that the mid-surface $S$ is given by a single patch parametrization in the principal coordinates is purely of technical character, and in principle may be removed by working in a different local coordinate chart, see [\ref{bib:Yao}].
\end{remark}

\begin{remark}
\label{rem:3.4}
Both Theorems 3.1 and 3.2 hold true for thin domains of thickness of order $h$ as well. A passage from shells to thin domains can be carried out
employing the localization idea of Kohn and Vogelius [\ref{bib:Koh.Vog.}], see [Lemma~5.2, \ref{bib:Harutyunyan.2}] for details. Here, a thin domain is roughly a shell with nonconstant thickness.
\end{remark}

\section{Proof of main results}
\label{sec:4}
\setcounter{equation}{0}

\begin{proof}[Proof of Theorems~3.1 and 3.2] 
\textbf{Ansatz free lower bounds.} First of all we note that Theorem~\ref{th:3.2} will yield a major simplification in proving Theorem~\ref{th:3.1} due to the results proven in [\ref{bib:Harutyunyan.2}]. Indeed, it has been proven in [\ref{bib:Harutyunyan.2}, Theorem~3.1], that under the conditions of Theorem~\ref{th:3.1}, one has the following Korn interpolation inequality
\begin{equation}
\label{4.0}
\|\nabla\Bu\|^2\leq C\left(\frac{\|u_t\|\cdot\|e(\Bu)\|}{h}+\|\Bu\|^2+\|e(\Bu)\|^2\right),
\end{equation}
for all vector fields $\Bu\in H^1(\Omega^h).$ Hence, provided the bounds (\ref{3.4}) hold, one needs to only estimate the out-of-plane component $u_t$ to get (\ref{3.3}) from (\ref{4.0}). Therefore we will first focus on the estimates in (\ref{3.4}) in the sequel. For the sake of simplicity we will be working with the so-called simplified gradient $\BF$ given by
\begin{equation}
  \label{4.1}
\BF=
\begin{bmatrix}
  u_{t,t} & \dfrac{u_{t,\Gth}-A_{\Gth}\Gk_{\Gth}u_{\Gth}}{A_{\Gth}} &
\dfrac{u_{t,z}-A_{z}\Gk_{z}u_{z}}{A_{z}}\\[3ex]
u_{\Gth,t}  &
\dfrac{A_{z}u_{\Gth,\Gth}+A_{z}A_{\Gth}\Gk_{\Gth}u_{t}+A_{\Gth,z}u_{z}}{A_{z}A_{\Gth}} &
\dfrac{A_{\Gth}u_{\Gth,z}-A_{z,\Gth}u_{z}}{A_{z}A_{\Gth}}\\[3ex]
u_{ z,t}  & \dfrac{A_{z}u_{z,\Gth}-A_{\Gth,z}u_{\Gth}}{A_{z}A_{\Gth}} &
\dfrac{A_{\Gth}u_{z,z}+A_{z}A_{\Gth}\Gk_{z}u_{t}+A_{z,\Gth}u_{\Gth}}{A_{z}A_{\Gth}}
\end{bmatrix},
\end{equation}
instead of the full gradient $\nabla\Bu.$ The simplified gradient is obtained by inserting $t=0$ in the denominators
of the second and third column entries in the gradient matrix in (\ref{2.3}). The strategy is to target analogous to
(\ref{3.4}) estimates
\begin{equation}
  \label{4.2}
\|u_\Gth\|\leq C\|\BF^{sym}\|,\quad \|u_z\|\leq C\|\BF^{sym}\|,
\end{equation}
then utilizing the obvious bounds
\begin{equation}
  \label{4.3}
\|\BF^{sym}-e(\Bu)\|\leq \|\BF-\nabla \Bu\|\leq Ch\|\nabla \Bu\|,
\end{equation}
recover (\ref{3.4}), and thus (\ref{3.3}) too. Here $\BF^{sym}=\frac{1}{2}(\BF+\BF^T)$ is the symmetric part of $\BF.$ Let us point out, that by approximation we will assume that the displacement $\Bu$ is $C^\infty.$
\begin{proof}[Proof of (\ref{4.2})] Following the strategy in [\ref{bib:Harutyunyan.1}], we will be freezing the variable $t$ and work with the lower right block of the matrix $\BF.$ Fixing any $t\in(-h/2,h/2),$ we will treat all the functions as functions depending only on the variables $(\Gth,z)$ for now. The introduction of a function $\varphi=\varphi(z)\in C^1(S),$ depending only on $z$ and yet to be chosen, will allow us to derive some kind of Carleman estimates for the components $u_\Gth$ and $u_z$ in terms of the operator $T(\Bu)=\BF^{sym}.$
We also introduce the auxiliary inner product of two functions $f,g\in L^2(E)$ as
\begin{equation}
  \label{4.4}
(f,g)_E=\int_E A_\Gth A_z fgd\Gth dz.
\end{equation}
Denote for simplicity $\rho=\frac{\Gk_z}{\Gk_\Gth}.$ We can calculate
$$(\BF^{sym}_{22}, \varphi \rho u_z)_E=\int_E\varphi A_z\rho u_zu_{\Gth,\Gth}+\int_E\varphi A_\Gth A_z\Gk_z u_zu_t+
\int_E\varphi A_{\Gth,z} \rho u_z^2,$$
and
$$(\BF^{sym}_{33}, \varphi u_z)_E=\int_E\varphi A_\Gth u_zu_{z,z}+\int_E\varphi A_\Gth A_z\Gk_z u_zu_t+
\int_E\varphi A_{z,\Gth}u_\Gth u_z,$$
thus we get subtracting and eliminating the $u_t$ component:
\begin{align}
\label{4.5}
(\BF^{sym}_{22}, \varphi \rho u_z)_E&-(\BF^{sym}_{33}, \varphi u_z)_E=\\ \nonumber
&=\int_E\varphi A_z\rho u_zu_{\Gth,\Gth}+\int_E\varphi A_{\Gth,z} \rho u_z^2
-\int_E\varphi A_\Gth u_zu_{z,z}-\int_E\varphi A_{z,\Gth}u_\Gth u_z.
\end{align}
Next we aim to obtain a quadratic form in $u_\Gth$ and $u_z$ on the right hand side of (\ref{4.5}).
Using the boundary conditions we have integrating by parts,
\begin{align}
\label{4.6}
&\int_E\varphi A_z\rho u_zu_{\Gth,\Gth}=-\int_E \frac{\partial}{\partial\Gth}(\varphi A_z\rho) u_{\Gth}u_z -
\int_E\varphi A_z\rho u_\Gth u_{z,\Gth},\\ \nonumber
&\int_E\varphi A_\Gth u_zu_{z,z}=-\frac{1}{2}\int_E\frac{\partial}{\partial z}(\varphi A_\Gth)u_z^2.
\end{align}
The summand $\int_E\varphi A_z\rho u_\Gth u_{z,\Gth}$ is still in an undesired form, thus we will obtain the same quantity using the other
entry $\BF_{23}^{sym}.$ We have
$$A_zu_{z,\Gth}=2A_\Gth A_z\BF_{23}^{sym}+A_{\Gth,z}u_\Gth+A_{z,\Gth}u_z-A_\Gth u_{\Gth,z},$$
thus we can calculate the integral $\int_E\varphi A_z\rho u_\Gth u_{z,\Gth}$ as follows:
\begin{equation}
\label{4.7}
\int_E \varphi A_z\rho u_{\Gth}u_{z,\Gth}=\int_E [2A_\Gth A_z\varphi \rho u_{\Gth}\BF_{23}^{sym}+A_{\Gth,z}\varphi\rho u_\Gth^2+A_{z,\Gth}\varphi\rho u_\Gth u_z-A_\Gth\varphi\rho u_\Gth u_{\Gth,z}].
\end{equation}
Upon getting rid of the partial derivative in the integrand of the last summand in (\ref{4.7}), we obtain
\begin{equation}
\label{4.8}
\int_E (\varphi A_z\rho)u_{\Gth}u_{z,\Gth}=\int_E [2A_\Gth A_z\varphi \rho\BF_{23}^{sym} u_{\Gth}+
(A_{\Gth,z}\varphi\rho+\frac{1}{2}\frac{\partial}{\partial z}(A_\Gth\varphi\rho))u_\Gth^2+A_{z,\Gth}\varphi\rho u_\Gth u_z].
\end{equation}
Finally, putting together the identities (\ref{4.5}), (\ref{4.6}), and (\ref{4.8}) we arrive at the key identity
\begin{align}
\label{4.9}
(\rho\BF^{sym}_{22}-\BF^{sym}_{33}, \varphi u_z)_E&+(2\rho\BF_{23}^{sym}, \varphi u_{\Gth})_E\\ \nonumber
&=\int_E\left[\varphi A_{\Gth,z} \rho+\frac{1}{2}\frac{\partial}{\partial z}(\varphi A_\Gth)\right]u_z^2
-\int_E\left[\frac{1}{2}\frac{\partial}{\partial z}(A_\Gth\varphi\rho)+A_{\Gth,z}\varphi\rho\right]u_\Gth^2\\ \nonumber
&-\int_E\left[\frac{\partial}{\partial\Gth}(\varphi A_z\rho)+(1+\rho)\varphi A_{z,\Gth}\right]u_\Gth u_z.
\end{align}
Now, if we choose $\varphi(z)=e^{\lambda z},$ then for $\lambda>0$ large enough, the quadratic form in $(u_\Gth,u_z)$ on the right hand side of
(\ref{4.9}) will have a large positive coefficient of $u_z^2$ and a large negative coefficient of $u_\Gth^2$ due to (\ref{2.4}) and the bounds that follow. 
 Hence we have for $\lambda$ large enough, the Carleman-like estimates
\begin{align}
\label{4.10}
\int_E e^{\lambda z}|u_\Gth|^2 &\leq C\left( \frac{1}{\lambda} \int_E e^{\lambda z}|\BF^{sym}|^2+\int_E e^{\lambda z}|u_z|^2 \right),\\ \nonumber
\int_E e^{\lambda z}|u_z|^2 &\leq C\left( \frac{1}{\lambda} \int_E e^{\lambda z}|\BF^{sym}|^2+\int_E e^{\lambda z}|u_\Gth|^2 \right),\\ \nonumber
& \text{for }\quad \lambda\geq \lambda_0,
\end{align}
where $\lambda_0>0$ depends only the shell mid-surface parameters. Next we estimate the $u_t$ component in terms of $u_\Gth,$ $u_z,$ and $\BF^{sym}.$ To that end, note that if we eliminate the $u_t$ component from the $22$ and $33$ entries of $\BF^{sym},$ then the
inner product of the remaining expressions can be compared to the inner product of the $23$ and $32$ terms of $\BF$ by an integration by parts.
Namely, we can calculate integrating by parts,
\begin{align}
\label{4.11}
&(\BF^{sym}_{22}-\Gk_\Gth u_t, \BF^{sym}_{33}-\Gk_z u_t)_E
=\int_E\left(u_{\Gth,\Gth}+\frac{A_{\Gth,z}}{A_\Gth}u_z\right)\left(u_{z,z}+\frac{A_{z,\Gth}}{A_z}u_\Gth\right)\\ \nonumber
&=\int_E u_{\Gth,z}u_{z,\Gth}+\int_E\frac{A_{\Gth,z}A_{z,\Gth}}{A_\Gth A_z}u_zu_\Gth
-\frac{1}{2}\int_E\frac{\partial}{\partial z}\left(\frac{A_{\Gth,z}}{A_\Gth}\right)u_z^2
-\frac{1}{2}\int_E\frac{\partial}{\partial\Gth}\left(\frac{A_{z,\Gth}}{A_z}\right)u_\Gth^2.
\end{align}
We have on the other hand again integrating by parts,
\begin{align}
\label{4.12}
 (\BF_{23}&, 2\BF_{23}^{sym}-\BF_{23})_E
=\int_E\left(u_{z,\Gth}-\frac{A_{\Gth,z}}{A_\Gth}u_\Gth\right)\left(u_{\Gth,z}-\frac{A_{z,\Gth}}{A_z}u_z\right)\\ \nonumber
&=\int_E u_{\Gth,z}u_{z,\Gth}+\int_E\frac{A_{\Gth,z}A_{z,\Gth}}{A_\Gth A_z}u_zu_\Gth
+\frac{1}{2}\int_E\frac{\partial}{\partial z}\left(\frac{A_{\Gth,z}}{A_\Gth}\right)u_\Gth^2
+\frac{1}{2}\int_E\frac{\partial}{\partial \Gth}\left(\frac{A_{z,\Gth}}{A_z}\right)u_z^2.
\end{align}
Hence we obtain from (\ref{4.11}) and (\ref{4.12}) the identity
\begin{align*}
(\BF^{sym}_{22}-\Gk_\Gth u_t, \BF^{sym}_{33}-\Gk_zu_t)_E-&2(\BF^{sym}_{23},\BF_{23})_E+(\BF_{23},\BF_{23})_E\\
&=-\frac{1}{2}\int_\Omega\left(\frac{\partial}{\partial z}\left(\frac{A_{\Gth,z}}{A_\Gth}\right)+\frac{\partial}{\partial \Gth}\left(\frac{A_{z,\Gth}}{A_z}\right)\right)(u_\Gth^2+u_z^2).\\
\end{align*}
The magic is, that the coefficient of $u_\Gth^2+u_z^2$ in the above integral on the right hand side appears in the Codazzi-Gauss identity [\ref{bib:Lee},\ref{bib:Tov.Smi.}] given below:
\begin{align*}
\dif{}{ z}\left(\frac{A_{\Gth, z}}{A_{ z}}\right)+
\dif{}{\Gth}\left(\frac{A_{ z,\Gth}}{A_{\Gth}}\right)=-A_{ z}A_{\Gth}\Gk_{z}\Gk_{\Gth}.
\end{align*}
Hence, the Cauchy-Schwarz inequality, and the last equality together with the Codazzi-Gauss identities yield the estimate
\begin{equation}
\label{4.13}
(\BF^{sym}_{22}-\Gk_\Gth u_t, \BF^{sym}_{33}-\Gk_z u_t)_E\leq \|\sqrt{\Gk_\Gth\Gk_z}u_\Gth\|_{L^2(E)}^2+\|\sqrt{\Gk_\Gth\Gk_z}u_z\|_{L^2(E)}^2
+C\|\BF^{sym}\|_{L^2(E)}^2,
\end{equation}
for some constant $C>0.$ Denote next the Gaussian curvature by $K_G=\Gk_\Gth\Gk_z.$ From the uniform bounds (\ref{2.5}) we have
$$\frac{1}{C}\sqrt{K_G}\leq \Gk_\Gth,\Gk_z\leq C\sqrt{K_G},\quad \text{on}\quad S,$$
thus the estimate (\ref{4.13}) implies, upon some applications of the Schwarz inequality, the key bound
\begin{equation}
\label{4.14}
\|\sqrt{K_G}u_t\|_{L^2(E)}^2\leq C(\|\sqrt{K_G}u_\Gth\|_{L^2(E)}^2+\|\sqrt{K_G}u_z\|_{L^2(E)}^2+\|\BF^{sym}\|_{L^2(E)}^2).
\end{equation}
Next we aim to obtain a reverse to (\ref{4.14}) inequality. Recall that $L=\sup_{\Gth\in (0,1)}|z_2(\Gth)-z_1(\Gth)|.$ Poincar\'e's inequality in the $z$ direction implies
\begin{equation}
\label{4.15}
\frac{1}{L^2}\|u_z\|_{L^2(E)}^2\leq \|u_{z,z}\|_{L^2(E)}^2.
\end{equation}
On the other hand we have by the triangle inequality from the $33$ component of the matrix $\BF^{sym},$ that
\begin{equation}
\label{4.16}
\|u_{z,z}\|_{L^2(E)}^2\leq C(\|\BF^{sym}\|_{L^2(E)}^2+\|\sqrt{K_G}u_t\|_{L^2(E)}^2+\|u_\Gth\|_{L^2(E)}^2),
\end{equation}
thus (\ref{4.15}) and (\ref{4.16}) yield another key bound:
\begin{equation}
\label{4.17}
\|u_z\|_{L^2(E)}^2\leq CL^2(\|\BF^{sym}\|_{L^2(E)}^2+\|\sqrt{K_G}u_t\|_{L^2(E)}^2+\|u_\Gth\|_{L^2(E)}^2),
\end{equation}
Choosing $\lambda=\lambda_0$ in (\ref{4.10}) and utilizing the bounds (\ref{4.14}) and (\ref{4.17}), we discover
\begin{equation}
\label{4.18}
\|u_z\|_{L^2(E)}^2\leq CL^2e^{\lambda_0L}(\|\BF^{sym}\|_{L^2(E)}^2+\|u_z\|_{L^2(E)}^2).
\end{equation}
It is clear that if $L^2e^{\lambda_0L}\leq \frac{1}{2C}$ (which is equivalent to $L\leq C_0$ for some $C_0>0$) in (\ref{4.18}), then one gets from (\ref{4.18}) the bound $\|u_z\|_{L^2(E)}^2\leq C\|\BF^{sym}\|_{L^2(E)}^2.$ This will in tern imply due to (\ref{4.10}) a similar bound for
the $u_\Gth$ component. Concluding, we obtain
\begin{equation}
\label{4.19}
\|u_\Gth\|_{L^2(E)}+\|u_z\|_{L^2(E)}\leq C\|\BF^{sym}\|_{L^2(E)}\quad\text{as long as}\quad L\leq C_0,
\end{equation}
for some $C_0>0$ depending only on the shell mid-surface parameters. This establishes the bounds in (\ref{4.2}) upon integration in $t\in (-h,2,h/2).$ 

\end{proof}

Next we will prove a similar Korn-Poincar\'e estimate for the $u_t$ component too. To that end we will utilize (\ref{4.14}) and (\ref{4.19}). However, due to the fact that the Gaussian curvature vanishes at the points $x_i,$ and is small near each $x_i,$ then alone (\ref{4.14}) and (\ref{4.19}) do not do the job. For the sake of simplicity, we will assume that $n=0,$ i.e., there is only one point on $S$ where the principal curvatures vanish. It is easy to verify that the same proof works for the case $n\geq 1$ too with no additional tools. Let the point $x_0\in S$ correspond to the domain point $(\Gth_0,z_0)\in E.$ We will cut the shell into two parts, one being a small neighborhood of $x_0$, another one being its complement. Fixing an exponent $0<\alpha<1/2$ yet to be defined, denote the closed discs centered at $(\Gth_0,z_0)$ in $\mathbb R^2$ and with radii $h^\alpha$ and $2h^\alpha$ by $D_1$ and $D_2$ respectively. It is clear that the images $S_1=\Br(D_1)\subset S_2=\Br(D_2)\subset S$ have in-plane size of order $h^\alpha.$ Denote next by $\Omega_1^h$ and $\Omega_2^h$ the shells with thickness $h$ and mid-surface $S_1$ and $S_2$ respectively. We have on one hand by (\ref{2.6}) that
\begin{equation}
\label{4.20}
C_1h^{2\alpha}\leq \sqrt{K_G}\leq C_2\quad \text{on}\quad E-D_1,
\end{equation}
thus the estimate (\ref{4.14}) will imply upon integrating in $t\in (-h/2,h/2),$
\begin{equation}
\label{4.21}
\|u_t\|_{L^2(\Omega^h-\Omega_1^h)}\leq \frac{C}{h^{2\alpha}}(\|u_\Gth\|_{L^2(\Omega^h)}+\|u_z\|_{L^2(\Omega^h)}+\|\BF^{sym}\|_{L^2(\Omega^h)}).
\end{equation}
Note also that (\ref{4.19}) and (\ref{4.21}) imply
\begin{equation}
\label{4.22}
\|u_t\|_{L^2(\Omega^h-\Omega_1^h)}\leq \frac{C}{h^{2\alpha}}\|\BF^{sym}\|_{L^2(\Omega^h)}.
\end{equation}
Consequently invoking (\ref{4.3}), we discover the estimate
\begin{equation}
\label{4.23}
\|u_t\|_{L^2(\Omega^h-\Omega_1^h)}\leq \frac{C}{h^{2\alpha}}(\|e(\Bu)\|_{L^2(\Omega^h)}+h\|\nabla \Bu\|_{L^2(\Omega^h)}).
\end{equation}
Next, on order to get an estimate on the full gradient $\nabla\Bu$ in the domain $\Omega^h-\Omega_1^h,$ we employ the universal Korn interpolation inequality (\ref{4.0}). Recall that (\ref{4.0}) is true for any bounded $C^2$ shells $\Omega^h$ and any vector fields $\Bu\in H^1(\Omega^h),$ e.g.,
 [Theorem~3.1, \ref{bib:Harutyunyan.2}]. Namely, we have applying (\ref{4.0}) in $\Omega^h-\Omega_1^h$ and taking into account the estimate (\ref{4.23}), the bound 
 \begin{equation}
\label{4.24}
\|\nabla\Bu\|_{L^2(\Omega^h-\Omega_1^h)}^2\leq C\left((\frac{1}{h^{2\alpha+1}}+\frac{1}{h^{4\alpha}})\|e(\Bu)\|_{L^2(\Omega^h)}^2+
\frac{\|\nabla \Bu\|_{L^2(\Omega^h)}\|e(\Bu)\|_{L^2(\Omega^h)}}{{h^{2\alpha}}}+h^{2-4\alpha}\|\nabla \Bu\|_{L^2(\Omega^h)}^2 \right).
\end{equation} 
Note that the constant $C$ in (\ref{4.24}) depends only on the mid-surface parameters of the shell $\Omega^h-\Omega_1^h$ as proven in [Theorem~3.1, \ref{bib:Harutyunyan.2}], thus given the definition of $\Omega^h-\Omega_1^h$, the constant $C$ in (\ref{4.24}) is uniform in $h.$ Now, in order to estimate the missing part $\|u_t\|_{L^2(D_1)},$ we employ Korn's first inequality without boundary conditions in $\Omega_2^h.$
It is known that Korn's first inequality without boundary conditions holds with a constant $C/h^2$ for shells of thickness $h,$ regardless of the curvature, [\ref{bib:Fri.Jam.Mue.1}]. Namely, the following statement holds: \textit{Let $\hat{\Omega}\subset \mathbb R^3$ be a bounded and connected $C^2$ shell with mid-surface $\hat{S}$ and thickness $h.$ Then there exists a constant $C=C(\hat{S}),$ depending only the mid-surface $\hat{S}$ parameters, such that for any vector field $\Bu\in H^1(\hat{\Omega}),$ there exists a constant skew-symmetric matrix $\BA\in \mathrm{skew}(\mathbb R^{3\times 3})$ ($\BA^T=-\BA$) such that }
 \begin{equation}
\label{4.25}
\|\nabla\Bu-\BA\|_{L^2(\hat{\Omega})}^2 \leq \frac{C}{h^2}\|e(\Bu)\|_{L^2(\hat{\Omega})}^2.
\end{equation}
As $\alpha<1/2,$ then we can rescale the shell $\Omega_2^h$ by a factor of $h^\alpha$ to obtain a new shell with thickness $h^{1-\alpha}$ and in-plane sizes of
order one. Thus there exists a constant skew-symmetric matrix $\BA_2\in \mathbb R^{3\times 3},$ such that
\begin{equation}
\label{4.26}
\|\nabla \Bu-\BA_2\|_{L^2(\Omega_2^h)}\leq \frac{C}{h^{1-\alpha}}\|e(\Bu)\|_{L^2(\Omega_2^h)}.
\end{equation}
Next, we employ (\ref{4.24}) and (\ref{4.26}) to get by the triangle inequality:
\begin{align}
\label{4.27}
\|\BA_2\|_{L^2(\Omega_2^h-\Omega_1^h)}&\leq C(\frac{1}{h^{\alpha+1/2}}+\frac{1}{h^{2\alpha}}+\frac{1}{h^{1-\alpha}})\|e(\Bu)\|_{L^2(\Omega^h)}\\ \nonumber
&+C\frac{\sqrt{\|\nabla \Bu\|_{L^2(\Omega^h)}\|e(\Bu)\|_{L^2(\Omega^h)}}}{{h^{\alpha}}}+Ch^{1-2\alpha}\|\nabla \Bu\|_{L^2(\Omega^h)}.
\end{align}
We now choose the exponent $\alpha$ to optimize (\ref{4.27}), i.e., $\alpha=1/4.$ Thus (\ref{4.27}) yields after a suitable Cauchy inequality the estimate:
\begin{equation}
\label{4.28}
\|\BA_2\|_{L^2(\Omega_2^h-\Omega_1^h)}\leq
C\left(\frac{1}{h^{3/4}}\|e(\Bu)\|_{L^2(\Omega^h)}+h^{1/2}\|\nabla \Bu\|_{L^2(\Omega^h)}\right).
\end{equation}
Consequently, as the matrix $\BA_2$ is constant, and the domains $\Omega_2^h-\Omega_1^h$ and $\Omega_1^h$ have comparable volumes, we have a similar 
to (\ref{4.28}) estimate in the domain $\Omega_1^h$ too:
\begin{equation}
\label{4.29}
\|\BA_2\|_{L^2(\Omega_1^h)}\leq C\left(\frac{1}{h^{3/4}}\|e(\Bu)\|_{L^2(\Omega^h)}+h^{1/2}\|\nabla \Bu\|_{L^2(\Omega^h)}\right).
\end{equation}
Putting together now (\ref{4.26}), (\ref{4.28}), and (\ref{4.29}) we get by the triangle inequality, that
\begin{equation}
\label{4.30}
\|\nabla \Bu\|_{L^2(\Omega_2^h)}\leq C\left(\frac{1}{h^{3/4}}\|e(\Bu)\|_{L^2(\Omega^h)}+h^{1/2}\|\nabla \Bu\|_{L^2(\Omega^h)}\right).
\end{equation}
Finally, by combining (\ref{4.24}) and (\ref{4.30}) we arrive at 
\begin{equation}
\label{4.31}
\|\nabla \Bu\|_{L^2(\Omega^h)}\leq C\left(\frac{1}{h^{3/4}}\|e(\Bu)\|_{L^2(\Omega^h)}+h^{1/2}\|\nabla \Bu\|_{L^2(\Omega^h)}\right),
\end{equation}
which itself implies for small enough $h$ the desired bound (\ref{3.3}):
\begin{equation}
\label{4.32}
\|\nabla \Bu\|_{L^2(\Omega^h)} \leq \frac{C}{h^{3/4}}\|e(\Bu)\|_{L^2(\Omega^h)}.
\end{equation}
Also, note that once (\ref{3.3}) is proven, it will imply together with (\ref{4.19}) and (\ref{4.3}) the estimates in (\ref{3.4}) as well.
This completes the Ansatz-free lower bound parts of Theorems 3.1 and 3.2.\\
\textbf{The Ansatz.} The Ansatz realizing the asymptotics in (\ref{3.3}) and (\ref{3.4}) is going to be a Kirchhoff-like Ansatz, localized
at the zero curvature point $x_0.$ The first thing to note is that the inequalities in (\ref{3.4}) suggest that we can remove all $u_\Gth$ and $u_z$ terms (not with partial derivatives) from $\BF$ and work with the even more simplified matrix $\BF_1,$ given by
\begin{equation}
  \label{4.33}
\BF_1=
\begin{bmatrix}
  u_{t,t} & \dfrac{u_{t,\Gth}}{A_{\Gth}} & \dfrac{u_{t,z}}{A_{z}}\\[3ex]
u_{\Gth,t}  &
\dfrac{u_{\Gth,\Gth}}{A_{\Gth}}+\Gk_{\Gth}u_{t} & \dfrac{u_{\Gth,z}}{A_{z}}\\[3ex]
u_{ z,t}  & \dfrac{u_{z,\Gth}}{A_{\Gth}} & \dfrac{u_{z,z}}{A_{z}}+A_{\Gth}\Gk_{z}u_{t}
\end{bmatrix},
\end{equation}
which almost looks like the Euclidean gradient. Also, the above analysis suggests that, in order to have equalities everywhere, the Ansatz should be localized in the ball centered at the point $(\Gth_0,z_0)$ with radius $Ch^{1/4},$ see Theorem~\ref{th:5.4} in Section~\ref{sec:5}. Aiming to make the first row
of the symmetric part $\BF_1^{sym}$ zero, we arrive at the Ansatz
\begin{equation}
\label{4.34}
\begin{cases}
u_t&=f\left(\frac{\Gth-\Gth_0}{h^{1/4}},\frac{z-z_0}{h^{1/4}}\right),\\[2ex]
u_\Gth&=-\frac{t}{h^{1/4} A_\Gth}\cdot f_{,\xi}\left(\frac{\Gth-\Gth_0}{h^{1/4}},\frac{z-z_0}{h^{1/4}}\right),\\[2ex]
u_z&=-\frac{t}{h^{1/4} A_z}\cdot f_{,\eta}\left(\frac{\Gth-\Gth_0}{h^{1/4}},\frac{z-z_0}{h^{1/4}}\right),\\
\end{cases}
\end{equation}
where the function $f(\xi,\eta)\in C_0^2[-1,1]^2$ is such that all the norms $\|f\|_{L^2[-1,1]^2},$ $\|\nabla f\|_{L^2[-1,1]^2},$
and $\|D^2 f\|_{L^2[-1,1]^2}$ are of order one. Recalling that one has $0\leq\Gk_\Gth,\Gk_z\leq h^{1/2}$ in the $h^{1/4}$ neighborhood of the point $(\Gth_0,z_0)$, a simple calculation reveals for the gradient components, the symmetric part, and the
displacement components, the asymptotics:
\begin{align}
\label{4.35}
&\|(\nabla \Bu)_{11}\|_{L^2(\Omega^h)}^2=0,\\ \nonumber
&\|(\nabla \Bu)_{22}\|_{L^2(\Omega^h)}^2,\ \ \|(\nabla \Bu)_{23}\|_{L^2(\Omega^h)}^2,\ \ \|(\nabla \Bu)_{32}\|_{L^2(\Omega^h)}^2,
\ \ \|(\nabla \Bu)_{33}\|_{L^2(\Omega^h)}^2,\sim h^{3}\\ \nonumber
&\|(\nabla \Bu)_{12}\|_{L^2(\Omega^h)}^2,\ \ \|(\nabla \Bu)_{13}\|_{L^2(\Omega^h)},\ \ 
\|(\nabla \Bu)_{21}\|_{L^2(\Omega^h)}^2,\ \ \|(\nabla \Bu)_{31}\|_{L^2(\Omega^h)}^2\sim h^{3/2}\\ \nonumber
&\|\nabla \Bu\|_{L^2(\Omega^h)}^2\sim h^{3/2},\ \ \|e(\Bu)\|_{L^2(\Omega^h)}^2\sim h^3,\\ \nonumber
&\|u_t\|_{L^2(\Omega^h)}^2\sim h^2,\ \ \|u_\Gth\|_{L^2(\Omega^h)}^2\sim h^3,\ \  \|u_z\|_{L^2(\Omega^h)}^2\sim h^3.
\end{align}
Therefore we establish the upper bound parts in Theorems 3.1 and 3.2.

\end{proof}

\section{Remarks on the localization of the deformation}
\label{sec:5}
\setcounter{equation}{0}

In this section we discuss on why it should be favorable for the buckling deformation to localize at the points $x_i,$
where the principal curvatures vanish. As it was mentioned in Section~\ref{sec:1}, the critical buckling load of a slender structure, 
in particular shell, under dead loading is closely related to the constant $C_1$ in the inequality (\ref{1.1}). 
More precisely, assume the slender body\footnote{Note, that a body is called slender if $\lim_{h\to 0}C_1=\infty.$} $\Omega^h$ is subject to the dead
loading 
\begin{equation}
\label{5.1}
\Bt(x,\lambda)=\lambda\Bt(x)+O(\lambda^2)
\end{equation}
 at the boundary of $\Omega^h,$ where $\Bt(x)\colon\partial\Omega^h\to \mathbb R^3$ is the 
direction of the load $\Bt(x,\lambda),$ and the small parameter $\lambda>0$ is the magnitude of the load. 
The general theory of slender structure buckling by Grabovsky and Truskinovksy [\ref{bib:Gra.Tru.}] is aimed at determining the critical buckling load $\lambda(h)$ of $\Omega^h$ under the loading program (\ref{5.1}) as the loading parameter continuously increases from zero. In the theory of hyperelasticity, the total energy of the system
is given by 
\begin{equation}
\label{5.2}
E(\By)=\int_{\Omega^h}W(\nabla\By(x))dx-\int_{\partial\Omega^h}\By(s)\cdot \Bt(s)ds,
\end{equation}
where $W(F)\colon\mathbb R^3\to\mathbb R^{+}$ is the elastic energy density of the body $\Omega^h,$ satisfying the standard conditions imposed 
in the elasticity theory: 
\begin{itemize}
\item[(i)] No deformation no elastic energy: $W(\BI)=0$;
\item[(ii)] Absence of prestress: $W_{\BF}(\BI)=\Bzr$;
\item[(iii)] Frame indifference: $W(\BR\BF)=W(\BF)$ for every $\BR\in SO(3)$;
\item[(iv)] Local stability of the trivial deformation $\By(\Bx)=\Bx$:
  \begin{equation}
    \label{5.2.1}
    \av{\BL_{0}\BGx,\BGx}\geq \alpha|\BGx|^{2},\quad\BGx\in\mathrm{sym}(\bb{R}^{3})\quad \text{for some}\quad\alpha>0,
  \end{equation}
\end{itemize} 
where $\mathrm{sym}(\bb{R}^{3})$ is the space of $3\times 3$ symmetric matrices, and
$\BL_{0}=W_{\BF\BF}(\BI)$ is the linearly elastic tensor of the material. Note that condition (ii) is a direct consequence of (i) as the identity matrix is a global minimizer of the nonnegative density function $W.$ However as its mechanical meaning is absence of prestress, one typically list it as a separate condition.\\ 
The loading program (\ref{5.1}) results in a family of equilibrium deformations $\By(x,h,\lambda)$ of $\Omega^h,$ called a trivial branch, that minimize the total energy (\ref{5.2}) locally. The buckling then corresponds to the failure of the stability of the trivial branch $\By(x,h,\lambda),$  i.e., when the second variation of the energy becomes unstable at $\By(x,h,\lambda)$ for some value $\lambda=\lambda(h).$ The value $\lambda>0$ for which this happens for the first time is called the buckling load. The critical value $\lambda(h)$ for slender structures is typically given by a power low in $h,$ i.e., $\lambda(h)=Ch^{\alpha}$ for some $C,\alpha>0.$ The task is determining the exponent $\alpha,$ and if possible the coefficient $C,$ which is
a much more delicate questions and is addressed only for very simple geometries. The general theory in [\ref{bib:Gra.Tru.}] is applicable under some conditions on the family $\By(x,h,\lambda),$ that toughly speaking exclude large bending of the structure [\ref{bib:Gra.Tru.},\ref{bib:Gra.Har.1},\ref{bib:Gra.Har.2}]. Namely:
\begin{definition}
  \label{def:5.1}
  A family of Lipschitz equilibria $\By(x;h,\Gl)$ of $E(\By)$ is called a
  \textbf{linearly elastic trivial branch,} if there exist $h_{0}>0$ and $\Gl_{0}>0$,
  so that for every $h\in[0,h_{0}]$ and $\Gl\in[0,\Gl_{0}]$ one has:
\begin{itemize}
\item[(i)] $\By(x;h,0)=x.$
\item[(ii)] There exist a family of Lipschitz fields $\Bu^{h}(x)\colon\Omega^h\to\mathbb R^3$,
  independent of $\Gl$, such that
\begin{align}
  \label{5.3}
  &\|\Grad\By(x;h,\Gl)-\BI-\Gl\Grad\Bu^{h}(x)\|_{L^{\infty}(\GO^{h})}\le C\Gl^{2},\\ \nonumber
  &\left\|\dif{(\Grad\By)}{\Gl}(x;h,\Gl)-\Grad\Bu^{h}(x) \right\|_{L^{\infty}(\GO^h)}\le C\Gl,
  \end{align}
\end{itemize}
where the constant $C$ is independent of $h$ and $\Gl$.
\end{definition}
Extending the theory of Grabovsky and Truskinovsky, Grabovsky and the author came up with an explicit buckling load formula in [\ref{bib:Gra.Har.1}], under the same condition (\ref{5.3}) on the trivial branch $\By(x,h,\lambda),$ and an additional condition on the body, that is of geometric nature. Before formulating that buckling load formula, let us recall the necessary setup from [\ref{bib:Gra.Har.1},\ref{bib:Gra.Har.2}]. Let $\BGs_{h}$ be the linear elastic stress tensor $\BGs_{h}(x)=\BL_0e(\Bu^{h}(x)).$ One defines the set
\begin{equation}
  \label{5.4}
\CA^h=\left\{\BGf\in V^h:\av{\BGs_{h},\Grad\BGf^{T}\Grad\BGf}<0\right\}
\end{equation}
of potentially destabilizing variations\footnote{Variations that may make the second variation negative.}. Recall that the vector space $V^h,$ identifying the boundary conditions fulfilled by the displacement is defined in (\ref{3.2}). The \textbf{constitutively linearized
critical buckling load} $\Gl_{\rm cl}(h)$ is determined by minimizing the Rayleigh quotient
\begin{equation}
  \label{5.5}
\Gl_{\rm cl}(h)=-\inf_{\BGf\in\CA^h}\frac{\int_{\GO^h}\av{\SFL_{0}e(\BGf),e(\BGf)}d\Bx}
{\int_{\GO^h}\av{\BGs_{h},\Grad\BGf^{T}\Grad\BGf}d\Bx}.
\end{equation}
As already mentioned, there is a definition of \textbf{slenderness} of bodies, which reads as follows. 
\begin{definition}
\label{def:5.2}
We say that the  body $\GO_{h}$ is \textbf{slender} if
\begin{equation}
  \label{5.6}
  \lim_{h\to 0}K(V^h)=0,
\end{equation}
where $K(V^h)$ is the inverse of the "best" constant in (\ref{1.2}), i.e., 
$$K(V^h)=\inf_{\BGf\in V^h}\frac{\|e(\BGf)\|^2}{\|\nabla \BGf\|^2}.$$
\end{definition}
Let us recall, without getting too much into details, that the asymptotic (as $h\to 0$) minimizers $\BGf_{h}$ of (\ref{5.5}) are proven in [\ref{bib:Gra.Har.1}] to also be buckling modes for the loading program (\ref{5.1}). The following theorem [\ref{bib:Gra.Har.1}, Theorem~2.6] also provides an asymptotic formula for the critical buckling load $\lambda(h).$
\begin{theorem}
\label{th:5.3}
Suppose that the body is slender in the sense of Definition~\ref{def:5.2}. Assume that the constitutively linearized
critical buckling load $\Gl_{\rm cl}(h)$ satisfies $\Gl_{\rm cl}(h)>0$ for all sufficiently small $h$ and
\begin{equation}
  \label{5.7}
  \lim_{h\to 0}\frac{\Gl_{\rm cl}^2(h)}{K(V^h)}=0.
\end{equation}
Then $\Gl_{\rm cl}(h)$ is the buckling load, and any asymptotic (as $h\to 0$) minimizer $\BGf_{h}$ of (\ref{5.5}) is a buckling mode for the 
loading program (\ref{5.1}).
\end{theorem}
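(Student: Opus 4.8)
The plan is to deduce Theorem~\ref{th:5.3} from the general buckling criterion of [\ref{bib:Gra.Tru.}] by expanding the second variation of the hyperelastic energy $E$ along the trivial branch and showing that, under hypothesis (\ref{5.7}), the genuinely nonlinear contributions are asymptotically negligible. The first step is to \emph{reduce the second variation to a quadratic form}. For $\BGf\in V^h$ one has $\delta^2 E(\By(\cdot;h,\Gl))[\BGf]=\int_{\Omega^h}\av{W_{\BF\BF}(\nabla\By(x;h,\Gl))\nabla\BGf,\nabla\BGf}\,dx$. Inserting the trivial-branch expansion $\nabla\By=\BI+\Gl\nabla\Bu^h+O(\Gl^2)$ from Definition~\ref{def:5.1} (valid in $L^\infty$ with $h$-independent constants), Taylor-expanding $W_{\BF\BF}$ about $\BI$, and using frame indifference (so that $\SFL_0=W_{\BF\BF}(\BI)$ annihilates skew-symmetric matrices, hence $\av{\SFL_0\nabla\BGf,\nabla\BGf}=\av{\SFL_0 e(\BGf),e(\BGf)}$) together with the equilibrium equations for $\By(\cdot;h,\Gl)$, one arrives at
\[
\delta^2 E(\By(\cdot;h,\Gl))[\BGf]=\CQ_0(\BGf)+\Gl\,\CB_h(\BGf)+\CR_h(\Gl,\BGf),
\]
where $\CQ_0(\BGf)=\int_{\Omega^h}\av{\SFL_0 e(\BGf),e(\BGf)}\,dx$, $\CB_h(\BGf)=\int_{\Omega^h}\av{\BGs_h,\nabla\BGf^{T}\nabla\BGf}\,dx$ with $\BGs_h=\SFL_0 e(\Bu^h)$, and the remainder obeys $|\CR_h(\Gl,\BGf)|\le C(\Gl\,\|e(\BGf)\|\,\|\nabla\BGf\|+\Gl^2\|\nabla\BGf\|^2)$ with $C$ independent of $h$ and $\Gl$ (using $\|\BGs_h\|_{L^\infty}=O(1)$).

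Next I would \emph{read off the constitutively linearized threshold}. Stability of the trivial branch at $(h,\Gl)$ means $\delta^2 E\ge0$ on all of $V^h$. Ignoring $\CR_h$, the form $\CQ_0(\BGf)+\Gl\,\CB_h(\BGf)$ is non-negative for every $\Gl\ge0$ whenever $\BGf\notin\CA^h$, while for $\BGf\in\CA^h$ it first turns negative once $\Gl$ exceeds $-\CQ_0(\BGf)/\CB_h(\BGf)$; optimizing over $\CA^h$ returns exactly $\Gl_{\rm cl}(h)$ of (\ref{5.5}). The hypothesis $\Gl_{\rm cl}(h)>0$ guarantees $\CA^h\ne\emptyset$ and that the infimum in (\ref{5.5}) is attained in the limit $h\to0$, so $\Gl_{\rm cl}(h)$ is the buckling load of the linearized problem and any near-minimizer $\BGf_h$ of (\ref{5.5}) is its buckling mode; it remains to put $\CR_h$ back and check that nothing changes asymptotically.

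Finally I would \emph{reinstate $\CR_h$ and show it is subcritical}, which is where I expect the real work and the main obstacle to lie. Normalizing $\|\nabla\BGf\|=1$, on the cone of variations relevant near the threshold one has $|\CB_h(\BGf)|=O(1)$, whereas slenderness ($K(V^h)\to0$) and Korn's first inequality give $\CQ_0(\BGf)\gtrsim K(V^h)$; pairing these through (\ref{5.5}) forces $\CQ_0(\BGf)\lesssim\Gl_{\rm cl}(h)$ there, hence $\|e(\BGf)\|\,\|\nabla\BGf\|\lesssim\sqrt{\Gl_{\rm cl}(h)}$. The dangerous part of the remainder, $\Gl^2\|\nabla\BGf\|^2\sim\Gl_{\rm cl}(h)^2$, must be negligible next to $\CQ_0(\BGf)\sim K(V^h)$, and this is precisely assumption (\ref{5.7}); the mixed term is then split by Cauchy's inequality between the $\|e(\BGf)\|^2$ and $\Gl^2\|\nabla\BGf\|^2$ buckets. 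One concludes $\delta^2 E=(1+o(1))(\CQ_0(\BGf)+\Gl\,\CB_h(\BGf))$ uniformly on the relevant set, which upgrades the linearized dichotomy to the nonlinear one: for $\Gl\le(1-\varepsilon)\Gl_{\rm cl}(h)$ the trivial branch is stable, while for $\Gl$ just above $\Gl_{\rm cl}(h)$ a near-minimizer $\BGf_h$ of (\ref{5.5}) makes $\delta^2 E$ negative, so the true buckling load is asymptotic to $\Gl_{\rm cl}(h)$ and $\BGf_h$ is a genuine buckling mode. The delicate point throughout is to pin down which combination of powers of $\Gl$, $K(V^h)$ and the Korn constant renders each error term subcritical and to keep every estimate uniform in $h$ as $h\to0$; by comparison the structural reduction of the first step and the variational identity for $\Gl_{\rm cl}(h)$ are comparatively soft.
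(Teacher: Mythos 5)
Note first that the paper does not prove Theorem~\ref{th:5.3}: it is imported verbatim from [\ref{bib:Gra.Har.1}, Theorem~2.6] (building on [\ref{bib:Gra.Tru.}]), so there is no in-paper proof to compare against. Measured against the argument in those references, your sketch reproduces the correct strategy: expand $\delta^{2}E$ along the trivial branch, isolate the constitutively linearized form $\CQ_0(\BGf)+\Gl\,\CB_h(\BGf)$, and use slenderness together with (\ref{5.7}) to show the remainder $O(\Gl\|e(\BGf)\|\,\|\nabla\BGf\|+\Gl^{2}\|\nabla\BGf\|^{2})$ is dominated by $\CQ_0(\BGf)\gtrsim K(V^h)\|\nabla\BGf\|^{2}$ near the threshold. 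That is exactly how (\ref{5.7}) enters in the original proof, and your identification of it as the binding constraint is right.

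The one step you compress too much is the claim that the $O(\Gl)$ term of the Taylor expansion of $W_{\BF\BF}(\nabla\By)$ tested against $\nabla\BGf\otimes\nabla\BGf$ equals $\CB_h(\BGf)=\int\av{\BGs_h,\nabla\BGf^{T}\nabla\BGf}$ up to errors of the admissible form. What the expansion actually produces is $\int\av{W_{\BF\BF\BF}(\BI)[\nabla\Bu^h]\nabla\BGf,\nabla\BGf}$, and extracting the geometric term $\av{\BGs_h,\nabla\BGf^{T}\nabla\BGf}$ from it --- with a remainder controlled by $C\|e(\BGf)\|\,\|\nabla\BGf\|$ rather than by $\|\nabla\BGf\|^{2}$ --- requires the differentiated frame-indifference identities for $W_{\BF\BF\BF}(\BI)$ (the ``constitutive linearization'' lemma of [\ref{bib:Gra.Tru.}]); it is not a consequence of $\SFL_0$ annihilating skew matrices alone. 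This is the heart of why the Rayleigh quotient (\ref{5.5}) has $e(\BGf)$ in the numerator but the full $\nabla\BGf^{T}\nabla\BGf$ in the denominator, and a complete proof would have to supply it. With that lemma granted, your stability/instability dichotomy for $\Gl\lessgtr(1\pm\varepsilon)\Gl_{\rm cl}(h)$ goes through as you describe.
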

We emphasize that, we believe the condition (\ref{5.7}) should be satisfied as long as the existence of a trivial branch in Definition~\ref{def:5.1} is proven. Theorem~\ref{th:5.3} basically provides a mechanism of determining the buckling load and buckling modes, which are the asymptotic minimizers of (\ref{5.5}). This theory has been successfully applied in [\ref{bib:Gra.Tru.}] to determine the buckling load for a compressed thin column, and in [\ref{bib:Gra.Har.2}] to determine the buckling load for axially compressed circular cylindrical shells. Coming back to the case when the shell is elliptic with finitely many points with zero principal curvatures (as described in Section~\ref{sec:2}), we mention that some of the arguments to follow are heuristic, and we will not present a rigorous mathematical proof for them; those will be explicitly pointed out.\footnote{We believe the assumptions below without a proof should be possible to justify rigorously at least for vase-like shells $\Omega^h$ looking like cut spheres, etc.} Let now the shell $\Omega^h$ undergo dead loading as in (\ref{5.1}).

Assume further that the resulting deformations $\By(x,h,\lambda)$ satisfy the conditions in Definition~\ref{def:5.1}, so that the theory is applicable (note that we do not prove here the existence of such a family of equilibrium deformations $\By(x,h,\Gl)$). Due to the Lipschitz property of $\Bu^h$ in Definition~\ref{def:5.1}, the linear elastic stress tensor $\BGs_{h}(x)=\BL_0e(\Bu^{h}(x))$ is bounded in $L^\infty(\Omega^h)$ uniformly in $(h,\lambda)\in [0,\tilde h]\times[0,\lambda_0].$  
Thus from (\ref{5.5}) we have the lower bound
\begin{equation}
\label{5.9}
C_1\cdot\inf_{\BGf\in\CA^h}\frac{\|e(\BGf)\|^2}{\|\nabla \BGf\|^2}\leq \Gl_{\rm cl}(h).
\end{equation}
Consequently Korn's first inequality (\ref{3.3}) and (\ref{5.9}) yield the lower bound 
\begin{equation}
\label{5.10}
C_2h^{3/2}\leq \Gl_{\rm cl}(h),\quad C_2>0.
\end{equation}
Another assumption is that the Ansatz given in (\ref{4.34}) belongs to the admissible set $\CA^h$. Note that this will be
straightforward to check upon determination of the fields $\Bu^h$ in (\ref{5.3}). Hence, taking into account the scalings in (\ref{4.35}), 
we have from (\ref{5.5}) the upper bound 
\begin{equation}
\label{5.11}
\Gl_{\rm cl}(h)\leq C_3h^{3/2},\quad C_3>0.
\end{equation}
Note next that we have by (\ref{5.11}) and by Theorem~\ref{th:3.1}, that
$$\lim_{h\to 0}\frac{\Gl_{\rm cl}^2(h)}{K(V^h)}\leq\lim_{h\to 0}Ch^{3/2}=0,$$
thus Theorem~\ref{th:5.3} is applicable. Consequently, combining (\ref{5.10}) and (\ref{5.11}), we arrive at the formula for the buckling load $\Gl(h):$
\begin{equation}
\label{5.12}
\Gl(h)\sim h^{3/2}.
\end{equation}
Next we prove, that under the assumptions made (that the trivial branch $\By(x,h,\Gl)$ exists and satisfies the Definition~\ref{def:5.1}, 
and that the Ansatz in (\ref{4.34}) belongs to the admissible set $\CA^h$), the buckling modes, which are the asymptotic solutions of 
(\ref{5.5}), must be asymptotically (as $h\to 0$) localized in the $Ch^{1/4}$ neighborhoods of the zero curvature points $x_i,$ $i=0,1,\dots,n$ in a sense defined below. Assume again for simplicity that there is only one such point, i.e., $n=0.$ We will prove the following theorem.
\begin{theorem}
\label{th:5.4} 
Let the family of slender domain and loading pairs $(\Omega^h,\Bt^h),$ and resulting buckling modes $\Bu^h=(u_t^h,u_\Gth^h,u_z^h)\in V^h$ be such that 
\begin{equation}
\label{5.13}
\Gl(h)\leq C h^{3/2}\quad\text{for all}\quad h\in (0,h_0),
\end{equation}
for some constants $C,h_0>0.$ Denote by $(\Gth_0,z_0)\in E$ the pre-image of $x_0,$ and $S_\delta=\Br(D_\delta(\Gth_0,z_0))$ 
the image of the closed disc $D_\delta(\Gth_0,z_0)$ centered at the point $(\Gth_0,z_0)$ and with radius $\delta>0.$  
Let $\Omega^h(\delta)$ be the shell of thickness $h$ over the mid-surface $S_\delta.$ Then, there exists a constant $c>0,$ such that 
\begin{equation}
\label{5.14}
\lim_{h\to 0}\frac{\|\Bu^h\|_{L^2\left(\Omega^h(ch^{1/4})\right)}}{\|\Bu^h\|_{L^2(\Omega^h)}}=1.
\end{equation}
\end{theorem}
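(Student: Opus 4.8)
\textbf{Proof proposal for Theorem~\ref{th:5.4}.} The plan is to show that almost all of the $L^2$ mass of any displacement realizing the critical buckling scaling $\Gl(h)\lesssim h^{3/2}$ must concentrate near the vanishing curvature point, by combining the already-established Korn-Poincar\'e estimates (\ref{3.4}) and (\ref{4.14}) with a contradiction argument: away from $x_0$, the shell is genuinely elliptic with Gaussian curvature bounded below, and there the standard positive-curvature Korn estimate ($C_1\sim h^{-1}$) holds, which is incompatible with the $h^{3/2}$ scaling of the buckling load unless the mass there is asymptotically negligible. Concretely, first I would fix $c>0$ and split $\Omega^h=\Omega^h(ch^{1/4})\cup(\Omega^h\setminus\Omega^h(ch^{1/4}))$. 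On the complement, the comparability condition (\ref{2.6}) together with the quadratic growth (\ref{2.5}) gives $\sqrt{K_G}\geq C_1' (ch^{1/4})^2 = C_1'c^2 h^{1/2}$ pointwise (as in (\ref{4.20}) with $\alpha=1/4$), so the weighted bound (\ref{4.14}), after dividing by $h^{1/2}$, yields $\|u_t^h\|_{L^2(\Omega^h\setminus\Omega^h(ch^{1/4}))}^2 \leq \frac{C}{c^4 h}\,(\|u_\Gth^h\|^2+\|u_z^h\|^2+\|\BF^{sym}\|^2)$, and then (\ref{4.19}) and (\ref{4.3}) upgrade this to $\|u_t^h\|_{L^2(\Omega^h\setminus\Omega^h(ch^{1/4}))}^2 \leq \frac{C}{c^4 h}\big(\|e(\Bu^h)\|^2 + h^2\|\nabla\Bu^h\|^2\big)$.

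Next I would relate $\|\Bu^h\|$ to $\|e(\Bu^h)\|$ through the buckling hypothesis. The assumption (\ref{5.13}) together with the lower bound (\ref{5.9}) for $\Gl_{\rm cl}(h)$ in terms of the Korn quotient $K(V^h)$ forces $K(V^h)\gtrsim$ something, but more usefully: since $\Bu^h$ is an asymptotic minimizer of (\ref{5.5}) achieving $\Gl_{\rm cl}(h)\sim h^{3/2}$, and the denominator of (\ref{5.5}) is controlled by $\|\nabla\BGf\|^2$ while the numerator is $\gtrsim\|e(\BGf)\|^2$, we get $\|e(\Bu^h)\|^2 \lesssim h^{3/2}\|\nabla\Bu^h\|^2$. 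Combined with Theorem~\ref{th:3.1}, $\|\nabla\Bu^h\|^2\leq Ch^{-3/2}\|e(\Bu^h)\|^2$, this is a self-consistent scaling, and it tells us precisely that $\|e(\Bu^h)\|^2/\|\nabla\Bu^h\|^2 \sim h^{3/2}$. Plugging $\|e(\Bu^h)\|^2\sim h^{3/2}\|\nabla\Bu^h\|^2$ into the complement estimate above gives $\|u_t^h\|_{L^2(\Omega^h\setminus\Omega^h(ch^{1/4}))}^2 \leq \frac{C}{c^4}\big(h^{1/2}+h\big)\|\nabla\Bu^h\|^2 \leq \frac{C}{c^4}h^{1/2}\|\nabla\Bu^h\|^2$, while on the full shell $\|u_t^h\|_{L^2(\Omega^h)}^2$ should be comparable to $\|\Bu^h\|_{L^2(\Omega^h)}^2$ up to the Korn-Poincar\'e control of $u_\Gth,u_z$ by $\|e(\Bu^h)\|$ (which is lower order), and one checks against the Ansatz scalings (\ref{4.35}) that $\|\Bu^h\|^2 \sim \|u_t^h\|^2 \sim h\|\nabla\Bu^h\|^2\cdot h^{?}$... so the ratio in (\ref{5.14}) on the complement side is $O(c^{-4}h^{1/2}/h^{0})\to 0$ in $h$ for fixed $c$, which is too weak; the actual argument must instead choose $c$ large.

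The correct logic, which I would make precise, is: suppose (\ref{5.14}) fails, i.e. there is a sequence $h_k\to 0$ and $\epsilon_0>0$ with $\|\Bu^{h_k}\|^2_{L^2(\Omega^{h_k}\setminus\Omega^{h_k}(ch_k^{1/4}))}\geq \epsilon_0\|\Bu^{h_k}\|^2_{L^2(\Omega^{h_k})}$ for \emph{every} $c>0$. Then restricting to the region $\Omega^{h_k}\setminus\Omega^{h_k}(ch_k^{1/4})$, which is a positive-curvature shell with Gaussian curvature bounded below by a constant depending on $c$ (but away from $0$), and using the known sharp Korn inequality $C_1\sim h^{-1}$ there — or more directly, using (\ref{4.14}) without the degenerating weight, (\ref{4.19}), and the interpolation (\ref{4.0}) — one derives $\|\nabla\Bu^{h_k}\|^2_{L^2(\Omega^{h_k}\setminus\Omega^{h_k}(ch_k^{1/4}))} \leq \frac{C(c)}{h_k}\|e(\Bu^{h_k})\|^2_{L^2(\Omega^{h_k})} + (\text{lower order})$. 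Since the energy denominator in (\ref{5.5}) is dominated by $\|\nabla\BGf\|^2$ concentrated where the mass is, if a fixed fraction of the $H^1$-energy of $\Bu^{h_k}$ lived in the elliptic region, the Rayleigh quotient (\ref{5.5}) would be bounded below by $\gtrsim h_k$, contradicting $\Gl(h_k)\leq Ch_k^{3/2}$. The main obstacle — and the step requiring the most care — is making rigorous that ``a fixed fraction of the mass in the elliptic region forces a fixed fraction of the Dirichlet energy there, hence Rayleigh quotient $\gtrsim h$'': this needs a localized version of the lower-bound machinery (essentially re-running the proof of (\ref{4.32}) on the subdomain $\Omega^h\setminus\Omega^h(ch^{1/4})$ with its own boundary, handling the artificial interface $\partial\Omega^h(ch^{1/4})$ via a cutoff and the Friedrichs-type estimate (\ref{4.25}) without boundary conditions, exactly as in (\ref{4.24})--(\ref{4.31}) but now with curvature bounded below so the exponent improves from $h^{-3/2}$ to $h^{-1}$), and then transferring that improved local Korn constant into a lower bound on (\ref{5.5}) that beats $h^{3/2}$. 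Once that is in place, letting $c\to\infty$ (equivalently shrinking the complement) and diagonalizing in $(h_k,c)$ closes the contradiction and yields (\ref{5.14}).
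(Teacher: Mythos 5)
Your proposal contains a genuine gap, and interestingly the gap sits exactly where the paper's one key idea goes. Your first computation --- the weighted estimate $\|u_t^h\|^2_{L^2(\Omega^h\setminus\Omega^h(ch^{1/4}))}\leq \frac{C}{c^4h}\bigl(\|e(\Bu^h)\|^2+h^2\|\nabla\Bu^h\|^2\bigr)$ obtained from (\ref{4.14}), (\ref{4.19}), (\ref{4.3}) --- is essentially the right one, and you abandon it prematurely. What you are missing is the matching \emph{lower} bound on $\|u_t^h\|_{L^2(\Omega^h)}$: since (\ref{5.13}) and (\ref{5.9}) force $\Bu^h$ to saturate the Korn constant $h^{-3/2}$, the Korn interpolation inequality (\ref{4.0}) together with the Korn--Poincar\'e bounds (\ref{3.4}) forces $\|u_t^h\|\gtrsim h^{-1/2}\|e(\Bu^h)\|$, i.e.\ $\|e(\Bu^h)\|/\|u_t^h\|\sim h^{1/2}$ (this is (\ref{5.16}) in the paper). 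With that in hand, your complement estimate reads $\|u_t^h\|^2_{L^2(\Omega^h\setminus\Omega^h(ch^{1/4}))}\leq \frac{C}{c^4}\|u_t^h\|^2_{L^2(\Omega^h)}$, and since $\|u_\Gth^h\|,\|u_z^h\|\lesssim\|e(\Bu^h)\|$ are lower order, taking $c$ large (the paper diagonalizes with $c=k\to\infty$ along the contradicting sequence) finishes the proof. Your remark that the ratio is ``$O(c^{-4}h^{1/2}/h^0)$, too weak'' stems from not knowing how $\|u_t^h\|^2$ compares to $\|\nabla\Bu^h\|^2$ for the extremizing family; (\ref{5.16}) supplies exactly that comparison.

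The alternative route you then sketch does not repair this, because its two central steps are left unproven and both are genuinely problematic. First, the negation of (\ref{5.14}) gives a fixed fraction of the $L^2$ \emph{mass} in the elliptic annulus, not a fixed fraction of the Dirichlet energy, and there is no obvious implication from one to the other; your argument needs the latter to lower-bound the Rayleigh quotient (\ref{5.5}). Second, the $h^{-1}$ Korn scaling for uniformly elliptic shells is proved under clamping of the entire lateral boundary, whereas the excised annulus $\Omega^h\setminus\Omega^h(ch^{1/4})$ has an artificial interface carrying no boundary condition; a cutoff produces the commutator term $\nabla\eta\odot\Bu$ in $e(\eta\Bu)$, which reintroduces the very quantity you are trying to control, and (\ref{4.25}) only controls $\nabla\Bu$ up to an unknown skew matrix. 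So the ``localized $h^{-1}$ Korn inequality'' is itself a nontrivial claim requiring an argument at least as involved as (\ref{4.24})--(\ref{4.31}). The paper's proof avoids all of this: it never localizes a Korn inequality, and instead plays the necessary condition (\ref{5.16}) against the weighted bound (\ref{4.14}) on the region where $\sqrt{K_G}\geq k^2h_k^{1/2}$, obtaining $\|u_t^k\|\leq C\bigl(1+\tfrac{1}{k^2h_k^{1/2}}\bigr)\|e(\Bu^k)\|$ and hence a contradiction as $k\to\infty$.
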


\begin{proof}
First of all note that the condition (\ref{5.13}) together with (\ref{5.9}) implies that the displacement family $\Bu^h$ realizes the asymptotics in the optimal constant in Korn's first inequality in (\ref{3.3}). Keeping in mind the Korn interpolation inequality (\ref{4.0}), and the uniform bounds in Theorem~\ref{th:3.2}, we infer that the family 
$\Bu^h$ has to fulfill the condition
\begin{equation}
\label{5.16}
\frac{\|e(\Bu^h)\|_{L^2(\Omega^h)}}{\|u_t^h\|_{L^2(\Omega^h)}}\sim h^{1/2}.
\end{equation}
Assume now in contradiction that (\ref{5.14}) fails. Then there exists $\tau>0$ and a sequence $h_k\to 0+,$ such that 
\begin{equation}
\label{5.18}
\frac{\|\Bu^k\|_{L^2\left(\Omega^{h_k}(kh_k^{1/4})\right)}}{\|\Bu^k\|_{L^2(\Omega^{h_k})}}\leq 1-\tau\quad\text{for all}\quad k\in\mathbb N,
\end{equation}
where we set for simplicity $\Bu^k=\Bu^{h_k}.$ We have by (\ref{2.5}) that $\sqrt{K_G}\geq k^2h_k^{1/2}$ in $E-D_{kh_k^{1/4}}(\Gth_0,z_0),$ thus the estimates in (\ref{4.14}) and (\ref{4.19}) imply upon integrating in $t\in (-h/2,h/2):$
$$\|u_t^k\|_{L^2\left(\Omega^{h_k}-\Omega^{h_k}(kh_k^{1/4})\right)}\leq \frac{C}{k^2h_k^{1/2}}\|\BF_k^{sym}\|_{L^2(\Omega^{h_k})},$$
which gives thanks to (\ref{4.3}) the bound 
\begin{equation}
\label{5.19}
\|u_t^k\|_{L^2\left(\Omega^{h_k}-\Omega^{h_k}(kh_k^{1/4})\right)}\leq 
\frac{C}{k^2h_k^{1/2}}\left(\|e(\Bu^k)\|_{L^2(\Omega^{h_k})}+h_k\|\nabla \Bu^k\|_{L^2(\Omega^{h_k})}\right).
\end{equation}
It is easy to see that the bounds (\ref{5.18}) and (\ref{4.19}) imply the estimate 
\begin{equation}
\label{5.20}
\|u_t^k\|_{L^2\left(\Omega^{h_k}(kh_k^{1/4})\right)}\leq \frac{1-\tau}{\tau}\|u_t^k\|_{L^2\left(\Omega^{h_k}-\Omega^{h_k}(kh_k^{1/4})\right)}+C\|e(\Bu^k)\|_{L^2(\Omega^{h_k})},
\end{equation}
for some $C>0.$ Consequently we derive from (\ref{5.19}) and (\ref{5.20}) the bound 
\begin{equation}
\label{5.21}
\|u_t^k\|_{L^2(\Omega^{h_k})}\leq C\left((1+\frac{1}{k^2h_k^{1/2}})\|e(\Bu^k)\|_{L^2(\Omega^{h_k})}+h_k\|\nabla \Bu^k\|_{L^2(\Omega^{h_k})}\right).
\end{equation}
Recalling Korn's first inequality (\ref{3.3}), we have 
$$h_k\|\nabla \Bu^k\|_{L^2(\Omega^h)}\leq Ch_k^{1/4}\|e(\Bu^k)\|_{L^2(\Omega^h)},$$
thus (\ref{5.21}) simplifies to 
\begin{equation}
\label{5.22}
\|u_t^k\|_{L^2(\Omega^{h_k})}\leq C\left(1+\frac{1}{k^2h_k^{1/2}}\right)\|e(\Bu^k)\|_{L^2(\Omega^{h_k})}
\end{equation}
Finally, it remains to note that (\ref{5.22}) implies 
$$\liminf_{k\to\infty}\frac{\|e(\Bu^k)\|_{L^2(\Omega^{h_k})}}{h_k^{1/2}\|u_t^k\|_{L^2(\Omega^{h_k})}}=\infty,$$
which contradicts (\ref{5.16}). The proof is now complete. 

\end{proof}

\section*{Acknowledgements}
This material is supported by the National Science Foundation under Grants No. DMS-1814361.

\end{document}